\documentclass[11pt]{amsart}
\usepackage{enumitem}
\usepackage{color}
\usepackage{soul}
\numberwithin{equation}{section}

\setlength{\oddsidemargin}{0.5 cm}
\setlength{\evensidemargin}{0.5cm}
   \setlength{\textwidth}{15cm}
    \setlength{\topmargin}{-0.0 cm}
\setlength{\textheight}{21cm}

\newtheorem{theorem}{Theorem}[section]

\newtheorem{proposition}[theorem]{Proposition}
\newtheorem{lemma}[theorem]{Lemma}
\newtheorem{corollary}[theorem]{Corollary}
\newtheorem{remark}[theorem]{Remark}

\def\al{\aligned}
\def\eal{\endaligned}
\def\be{\begin{equation}}
\def\ee{\end{equation}}
\def\lab{\label}
\def\a{\alpha}

\def\M{{\bf M}}
\def\g{{\bar}}
\def\lam{{\lambda}}

\def\al{\aligned}

\def\g{\bar}
\def\pa{\partial}
\def\d{\nabla}

\DeclareMathOperator{\Ric}{Ric}

\numberwithin{equation}{section}

\begin{document}

\title[]{Gradient and Eigenvalue Estimates on the canonical bundle of K\"ahler manifolds}
\author{Zhiqin Lu, Qi S. Zhang and Meng Zhu}
\address{Department of Mathematics, University of California, Irvine, CA 92697, USA}
\email{zlu@math.uci.edu}
\address{Department of Mathematics, University of California, Riverside, CA 92521, USA}
\email{qizhang@math.ucr.edu}
\address{School of Mathematical Sciences and Shanghai Key Laboratory of PMMP, East China Normal University, Shanghai 200241, China}
\email{mzhu@math.ecnu.edu.cn}
\date{08/2020; MSC 2020: 58A10, 58J35, 58J50}

\begin{abstract}
We prove certain gradient and eigenvalue estimates, as well as the heat kernel estimates, for the Hodge Laplacian on $(m,0)$ forms, i.e., sections of the canonical bundle of K\"ahler manifolds, where $m$ is the complex dimension of the manifold. Instead of the usual dependence on curvature tensor, our condition depends only on the Ricci curvature bound. The proof is based on a new Bochner type formula for the gradient of $(m, 0)$ forms, which involves only the Ricci curvature and the gradient of the scalar curvature.

\end{abstract}
\maketitle
\tableofcontents

\section{Introduction}

In this paper, we prove, under conditions on the Ricci curvature alone, certain gradient and eigenvalue estimates for solutions of some elliptic and parabolic equations involving the Hodge Laplacian on sections of the canonical bundle of K\"ahler manifolds. Let us recall that a section of the canonical bundle of a K\"ahler manifold with complex
dimension $m$ is an $(m, 0)$ form. Since the appearance of De Rham and Hodge theory, there has been a vast literature on the study of Hodge Laplacian acting on differential forms due to its significance to analysis, geometry and topology.
 See for example the papers \cite{Bo, Ko, KW, Li:1, Do, CT, Lo, Ma, WZ, CL}.  The general paradigm for gradient and eigenvalue estimates on $p$ forms is that results concerning $0$ forms e.g. scalar functions involve the Ricci curvature and
results for one and higher forms involve the curvature tensor in general. The reason is that the Bochner--Weitzenb\"ock formula for $p$ forms with $p \ge 1$ involve the full curvature tensor.  One exception is for spin bundles where the Lichnerowicz formula involves only the scalar curvature. In this paper we manage to find another exception: $(m,0)$ forms on K\"ahler manifolds. In this case the Bochner--Weitzenb\"ock formula  for gradients involves only the Ricci curvature and the gradient of the scalar curvature. One implication of this result is that gradient estimates on  heat kernels and eigenvalue estimates for $(m,0)$ forms on Calabi-Yau manifolds are free of curvature conditions.
We mention that for holomorphic sections in the canonical bundle, it is well known that the Bochner formula for gradient of the sections depends only on the Ricci curvature. See \cite{Ti}  (5.15) e.g. This fact plays an important role in complex differential geometry, especially on the parts involving the Bergman kernel.  What we did here is removing the holomorphic condition.

The main results of the paper are Theorem \ref{thmain} below and Lemma \ref{Bochner} in the next section.
In the theorem, we obtain a qualitatively sharp lower bound for all eigenvalues of the Hodge Laplacian on the canonical bundle of K\"ahler manifolds. The curvature conditions are bounded Ricci curvature and nonnegative scalar curvature which is positive somewhere. Similar lower bounds for eigenvalues higher than certain Betti numbers and under bounded curvature assumption are proven in  J.P. Wang-L.F. Zhou \cite{WZ} for general compact manifolds and forms. Finding a lower bound for  eigenvalues has been an active research topic for many years.  We mention that although
an $(m, 0)$ form looks like a scalar function, it behaves more like a vector field since the coefficient are complex valued. For this reason we are unable to extend the classical method of integrating a gradient bound in Li-Yau \cite{LY} to bound the first eigenvalue from below, which is the first step in bounding other eigenvalues from below.

Before stating the main results, we need to set up the conventions first. Let $({\M}, g_{i\bar{j}})$ be a compact K\"ahler manifold of complex dimension $m$. Denote by $\Delta_d$ and $\Delta_{\bar{\pa}}$ the real and complex Hodge Laplacian, respectively. Denote by $\Delta=-\d^*\d$ the complex rough Laplacian, which is $\d_i\d_{\bar{i}}$ in local normal coordinates, then $\Delta=-\Delta_{\bar{\pa}}=-\frac{1}{2}\Delta_d$ when they act on functions and $(p,0)$ forms (see e.g. Theorem 6.1 on p119 in \cite{MK}). Let $Ric$ stand for the Ricci curvature tensor, and $R_{i\bar{j}}$ is the component of $Ric$ in holomorphic coordinates. Let $R$ be the scalar curvature. The volume of $\M$ is denoted by $|\M|$, and the diameter of $\M$ is denoted by $diam(\M)$. Let $K_\M=\Lambda^m(T^*\M)$ be the canonical bundle of $\M$, and $L^2(\M, K_\M)$ the space of all smooth sections of $K_\M$, i.e., $(m,0)$ forms on $\M$.

\begin{theorem}
\lab{thmain}
Let $(\M^m, g_{i\bar{j}})$ be a compact K\"ahler manifold of complex dimension $m\geq 2$. Denote by $\lambda_k$ the $k$-th eigenvalue of the Hodge Laplacian acting on $L^2(\M, K_\M)$, and $\lambda^0_1$ the first  positive eigenvalue of the scalar Laplacian. Suppose that the Ricci curvature satisfies $|Ric|\leq K$ and the scalar curvature satisfies:  $R\geq 0$ and is positive somewhere. Then the following lower bounds are true.

\be
\al
\lambda_1 &\ge \min\{\frac{1}{2} \lambda^0_1 + \inf_\M R, \quad \frac{\lambda^0_1}{2(\lambda^0_1 + \sup R)} \frac{1}{|\M|} \int_\M R\} \equiv c_0>0;\\
\lambda_k &\geq c_1 k^{\frac{1}{m}}, \qquad  \forall\ k\geq 2,
\eal
\ee
where $c_1=\min\{c_0, \left(\frac{m}{\Lambda(m+1)}\right)^{\frac{1}{m}}\}$, $\Lambda=2(2m)^{m+1}16^{2m(m-1)}C_S^m|\M|\left(1+\frac{K+|\M|^{-\frac{1}{m}}}{c_0}\right)^{m+1}$, and $C_S$ is the $L^2$ scalar Sobolev constant, namely, the smallest constant such that
\be
\lab{sob}
\left(\int_{\M}|f|^{\frac{2m}{m-1}}\right)^{\frac{m-1}{m}}\leq C_S\left(\int_{\M}|\d f|^2+|\M|^{-\frac{1}{m}}\int_{\M}|f|^2\right)
\ee
for any smooth function $f$ on $\M$.
\end{theorem}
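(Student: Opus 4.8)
The plan is to deduce both estimates from the order--zero companion of Lemma~\ref{Bochner}. Because $K_\M$ is a holomorphic line bundle whose Chern curvature equals $-\Ric$, and hence has trace $-R$, every section $\omega\in L^{2}(\M,K_\M)$ satisfies the Bochner--Kodaira identity
\[
\nabla^{*}\nabla\,\omega=\Delta_{d}\,\omega-R\,\omega ,
\]
where $\nabla$ is the Chern connection and $\nabla^{*}\nabla$ the rough Laplacian; this is the shadow of Lemma~\ref{Bochner} obtained by dropping the genuinely gradient terms, and it is the only place the special geometry of the canonical bundle is used. Everything else comes from feeding this identity into the \emph{scalar} Poincar\'e and Sobolev inequalities, the passage from sections to functions being Kato's inequality $|\nabla|\omega||\le|\nabla\omega|$, which makes $|\omega|$ a scalar (sub)solution.

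\emph{Lower bound for $\lambda_{1}$.} Let $\omega$ be a first eigenform, $\Delta_{d}\omega=\lambda_{1}\omega$, with $\int_{\M}|\omega|^{2}=1$. Pairing the identity above with $\omega$ and integrating gives the basic relation $\lambda_{1}=\int_{\M}|\nabla\omega|^{2}+\int_{\M}R|\omega|^{2}$, a sum of two nonnegative terms (here $R\ge0$ is used). I would split on $t:=|\M|^{-1}\big(\int_{\M}|\omega|\big)^{2}\in[0,1]$, which by Cauchy--Schwarz equals $1-\|\,|\omega|-\overline{|\omega|}\,\|_{2}^{2}$ and so measures proximity of $|\omega|$ to a constant. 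If $t\le\tfrac12$: Kato's inequality and the scalar Poincar\'e inequality $\int|\nabla u|^{2}\ge\lambda_{1}^{0}\int|u-\bar u|^{2}$ with $u=|\omega|$ give $\int_{\M}|\nabla\omega|^{2}\ge\lambda_{1}^{0}(1-t)\ge\tfrac12\lambda_{1}^{0}$, whence $\lambda_{1}\ge\tfrac12\lambda_{1}^{0}+\inf_{\M}R$. If $t>\tfrac12$: writing $|\omega|=\overline{|\omega|}+g$ with $\int_{\M}g=0$ and, by Poincar\'e, $\|g\|_{2}^{2}\le(\lambda_{1}^{0})^{-1}\|\nabla\omega\|_{2}^{2}\le\lambda_{1}/\lambda_{1}^{0}$, one expands
\[
\int_{\M}R|\omega|^{2}=\overline{|\omega|}^{\,2}\!\int_{\M}R+2\,\overline{|\omega|}\!\int_{\M}(R-\bar R)g+\int_{\M}Rg^{2}\ \ge\ \overline{|\omega|}^{\,2}\!\int_{\M}R-2\,\overline{|\omega|}\,\|R-\bar R\|_{2}\,\|g\|_{2},
\]
using $R\ge0$ once more to drop $\int_{\M}Rg^{2}\ge0$; substituting $|\M|\,\overline{|\omega|}^{\,2}=1-\|g\|_{2}^{2}$ and $\|g\|_{2}\le(\lambda_{1}/\lambda_{1}^{0})^{1/2}$ into the basic relation produces a quadratic inequality for $\lambda_{1}$ that, with care, delivers $\lambda_{1}\ge\frac{\lambda_{1}^{0}}{2(\lambda_{1}^{0}+\sup R)}\,\frac1{|\M|}\int_{\M}R$. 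The minimum of the two cases is $c_{0}$.

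\emph{Lower bound for $\lambda_{k}$.} Here the plan is to use the heat kernel. First note that under the hypotheses there are no nonzero harmonic $(m,0)$ forms: such a form is holomorphic, so the identity forces $\|\nabla\omega\|_{2}^{2}=-\int_{\M}R|\omega|^{2}\le0$, hence $\omega$ is parallel with $|\omega|^{2}\int_{\M}R=0$, hence $\omega=0$; so $0<\lambda_{1}\le\lambda_{2}\le\cdots$. If $\omega(t)$ solves $\partial_{t}\omega=-\Delta_{d}\omega$, the identity gives $(\partial_{t}+\Delta_{d})|\omega|^{2}=-2|\nabla\omega|^{2}-2R|\omega|^{2}\le0$, so $v:=|\omega|^{2}$ is a scalar heat subsolution. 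Parabolic Nash--Moser iteration for $v$ with the Sobolev inequality \eqref{sob} (the $2m$--dimensional one) yields $\|v(T)\|_{\infty}\le C\,T^{-m}\|v(T/2)\|_{1}\le C\,T^{-m}\|\omega(0)\|_{2}^{2}$, i.e.\ by semigroup duality $\|H(t,x,x)\|\le C\,t^{-m}$ for the heat kernel of $\Delta_{d}$ on $K_\M$; the $C_{S}^{m}$ and the combinatorial constants $(2m)^{m+1}$, $16^{2m(m-1)}$ are the usual iteration output, and the factor $\big(1+\tfrac{K+|\M|^{-1/m}}{c_{0}}\big)^{m+1}$ in $\Lambda$ appears when the short--time bound is propagated to all $t>0$ by means of the spectral gap $\lambda_{1}\ge c_{0}$. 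Integrating the diagonal gives $\sum_{k}e^{-\lambda_{k}t}=\int_{\M}\operatorname{tr}_{x}H(t,x,x)\le C|\M|\,t^{-m}$, and the elementary $k\,e^{-\lambda_{k}t}\le\sum_{j}e^{-\lambda_{j}t}$, optimized in $t$, gives $\lambda_{k}\ge\big(\tfrac{m}{\Lambda(m+1)}\big)^{1/m}k^{1/m}$; combined with $\lambda_{k}\ge\lambda_{1}\ge c_{0}$ this is $\lambda_{k}\ge c_{1}k^{1/m}$ for $k\ge2$.

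\emph{Principal difficulty.} The genuinely hard step is the near--constant case of the bound on $\lambda_{1}$: turning $\lambda_{1}=\int|\nabla\omega|^{2}+\int R|\omega|^{2}$ into the essentially sharp constant $c_{0}$ forces one to exploit $R\ge0$ to annihilate the quadratic error $\int Rg^{2}$ and to treat the cross term $\overline{|\omega|}\int(R-\bar R)g$ without waste. Part (b), by contrast, is routine once one observes that $|\omega|^{2}$ is a scalar subsolution and that there is no harmonic obstruction to the heat--kernel counting; in particular the full gradient Bochner formula of Lemma~\ref{Bochner} is not needed for Theorem~\ref{thmain}, only its order--zero consequence — although it is what powers the companion gradient and heat--kernel estimates elsewhere in the paper.
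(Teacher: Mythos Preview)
Your treatment of $\lambda_1$ tracks the paper's Lemma~\ref{lambda1 lower bound R>0} closely: same Bochner--Kodaira identity, same Kato reduction, same case split on whether $\||\omega|-\overline{|\omega|}\|_2^2$ exceeds $\tfrac12$. The difference is in your Case~2. You bound the cross term by $2\,\overline{|\omega|}\,\|R-\bar R\|_2\|g\|_2$ and then invoke an unspecified ``quadratic inequality for $\lambda_1$''; the paper instead keeps $\int(\lambda_1^0+R)(f-a)^2$ intact together with $2a\int R(f-a)$ and completes the square with the weight $\sqrt{\lambda_1^0+R}$, arriving directly at $\lambda_1\ge a^2\int\frac{\lambda_1^0 R}{\lambda_1^0+R}\ge a^2\frac{\lambda_1^0}{\lambda_1^0+\sup R}\int R$. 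Your ``with care'' is precisely where the stated constant lives, and it is not clear your $\|R-\bar R\|_2$ route reproduces it.

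Your route to $\lambda_k$ is \emph{genuinely different} from the paper's. The paper does not use the heat kernel here at all: it proves an $L^\infty$ bound for eigenforms by elliptic Moser iteration (Lemma~\ref{MVI}), then an $L^\infty$ bound for $|\d\phi|$ (Lemma~\ref{lem gradient est}) which \emph{does} invoke the full gradient Bochner formula of Lemma~\ref{Bochner}, combines these into a pointwise bound on $|\d w|^2+(\lambda_k+K)|w|^2$ for unit combinations $w$ of the first $k$ eigenforms (Corollary~\ref{gradient estimate for combination}), and uses a rank argument on the $(0,1)$--gradients $\d_{\bar l}\phi_j$ to obtain the sum inequality $\sum_{j\le k}\lambda_j\le\Lambda'(\lambda_k+K+|\M|^{-1/m})^{m+1}$. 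An elementary induction (Lemma~\ref{algebraic lemma}) converts this into $\lambda_k\ge c_1k^{1/m}$. So your claim that ``the full gradient Bochner formula of Lemma~\ref{Bochner} is not needed for Theorem~\ref{thmain}'' is false as a description of the paper's proof; the constant $\Lambda$, with its explicit $K$, comes precisely from the gradient estimate that uses $|Ric|\le K$.

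Your heat--kernel alternative is legitimate in principle---the paper itself notes in the introduction that a trace bound yields $\lambda_j\ge C_1 j^{1/m}$ for $j\ge j_0$---and it buys you avoidance of Lemma~\ref{Bochner}. What it costs is the constant. Under $R\ge 0$ the scalar subsolution $(\partial_t-\Delta)|\omega|^2\le 0$ carries no $K$, so a parabolic Moser iteration on $|\omega|^2$ alone would produce a $K$--free bound, not the stated $\Lambda$; your explanation that $K$ ``appears when the short--time bound is propagated'' via the spectral gap is not right. The numerics $2(2m)^{m+1}16^{2m(m-1)}$ and the factor $(1+\tfrac{K+|\M|^{-1/m}}{c_0})^{m+1}$ are artifacts of the paper's two elliptic iterations (one on $|\phi|$, one on $|\d\phi|$) and of the rewriting $(\lambda_k+K+|\M|^{-1/m})^{m+1}\le(1+\tfrac{K+|\M|^{-1/m}}{c_0})^{m+1}\lambda_k^{m+1}$; they will not fall out of your approach.
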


\begin{remark}
For Riemann surfaces, i.e., the case $m=1$, the picture of $\lambda_1$ is clearer. We may start from the Euler characteristic $\chi(\M)=b_0-b_1+b_2=2b_0-b_1=2-b_1=2-2g$, where $b_k$ is the kth Betti number, and $g$ is the genus of $\M$. Thus, it follows that $b_1=g$. So when $g\geq 1$, we have $\lambda_1=0$ since $2h^{1,0}=b_1>0$, where $h^{1,0}$ is the $(1,0)$ Hodge number.

When $g=0$, i.e., $b_1=0$, one concludes that $\lambda_1>0$. Let a (1,0) form $\phi$ be an eigenform of $\lambda_1$. Since $\partial \phi=0$ and $\lambda_1>0$, it follows that $\partial^* \phi\neq 0$ and $\Delta_d\partial^*\phi=\partial^*\Delta_d\phi=\lambda_1\partial^*\phi$. In other words, $\partial^*\phi$ is an eigenfunction of $\Delta_d$. Thus, one gets $\lambda_1\geq \lambda_1^0$.
\end{remark}

\begin{remark}
It is well known that
\[
\int_{\M}R\, \omega^n=m\pi\int_{\M} \mathbf{c}_1\wedge[\omega]^{m-1},
\]
which is independent of the choice of the K\"ahler metric in the K\"ahler class. Here $\mathbf{c}_1$ is the first Chern class of $\M$.
\end{remark}

\begin{remark} In general, it is impossible to find a positive lower bound for $\lambda_1$, the first eigenvalue.  Indeed,  $\lambda_1$ can be $0$ if the scalar curvature is $0$. It is a standard fact that
the Hodge number  $h^{m,0}$  for Calabi-Yau m-manifolds is $1$, i.e. the space of harmonic $(m, 0)$ forms
is one dimensional, and therefore $\lambda_1=0$.

On the other hand, using the fact that the Bochner formula for all $(p, 0)$ forms involve only the Ricci curvature, one can prove an upper bound for the heat kernel of the Hodge Laplacian on $(p, 0)$ forms which depends only on volume of $\M$ and the bound on Ricci curvature. See for example Theorem \ref{thhk} below.
Then it is  well known (c.f. section 5 in \cite{LY:2}) that the upper bound  for the trace of the heat kernel implies
\[
\lambda_j \ge C_1 \, j^{1/m}
\] for  $j \ge j_0>0$. Here $j_0$ and $C_1$ depends only on $m, p,  |\M|$ and $K$, the Ricci bound.
\end{remark}

\begin{remark}
In the theorem above, the parameters $c_0$ and $C_S$ can be explicitly estimated in terms of geometric quantities. Following Li-Yau,  (see \cite{LY} and Li' book \cite{Li} Theorem 5.7, and Yang \cite{Ya}), There are dimensional constant $C_2$ such that
\[
\lambda^0_1 \ge \frac{ \pi^2}{ diam(\M)^2} \exp(-C_2\, diam(\M) \sqrt{K}).
\]As well known by the work of Zhong-Yang \cite{ZY}, if $Ric \ge 0$, then a sharp lower bound holds: $\lambda^0_1 \ge \frac{\pi^2}{ diam(\M)^2}$.

 The Sobolev constant $C_S$ depends on $m$, lower bound $K$ of the Ricci curvature, diameter upper bound $D$ and volume lower bound $V$ of $\M$ (see e.g., \cite{Cro}, \cite{Yau}). Indeed, by a combination of the results in \cite{Li} and \cite{Cro}, the following explicit upper bound of $C_S$ and hence a lower bound of $\lambda_k$ can be established.
\be
\lab{sobcst}
C_S\leq 2^{\frac{m-1}{m(2m-1)}}\max\left\{\left(\frac{m-1}{2m-1} IN_{\alpha}(\M)\right)^{-2},2^{\frac{2m-3}{m-1}}\right\},
\ee with
\be
\lab{isoal}
IN_{\alpha}(\M)\geq \left(\frac{1}{4\omega_n^{n-1}\omega_{n-1}}\right)^{\frac{1}{n}}\left(\frac{|\M|}{\int_0^D [\sqrt{K^{-1}}\sinh(\sqrt{K}r)]^{n-1}dr}\right)^{\frac{n+1}{n}}.
\ee Here $n=2m$ is the real dimension of $\M$, and $\omega_n$ denotes the volume of the unit $n$-sphere.

The proof goes as follows.
Let $\alpha=\frac{n}{n-1}$, and $\beta=\frac{2\alpha-2}{2-\alpha}=\frac{2}{n-2}$.
Using the inequality
\[
1-x^{\beta}\leq (1-x)^{\beta}\leq 2^{1-\beta}-x^{\beta},\ 0\leq x, \beta\leq 1,
\]
one can see that in the proof of Corollary 9.9 in \cite{Li}, one can take $C_3=2^{1-\beta}=2^{\frac{n-4}{n-2}}$, $C_4=1$, $C_5=C_6=2C_3$, and the inequality (9.8) therein becomes
\be\label{NSobolev}
\int_{\M}|\d f|^2\geq \left(\frac{n-2}{2n-2} SN_{\alpha}(\M)\right)^2\left[2^{-\frac{n-2}{n(n-1)}}
\left(\int_{\M}|f|^{\frac{2n}{n-2}}\right)^{\frac{n-2}{n}}-C_5|\M|^{-\frac{2}{n}}\int_{\M}|f|^2,\right].
\ee Furthermore, according to Theorem 9.6 in \cite{Li} and Theorem 13 in \cite{Cro}, the constant $SN_{\alpha}(\M)$ satisfies
the following estimates:
\[
IN_{\alpha}(\M)\leq SN_{\alpha}(\M)\leq 2^{1/n}IN_{\alpha}(\M),
\] where the Neumann isoperimetric constant $IN_{\alpha}$ satisfies (\ref{isoal}).
Combining the estimates above and replacing $n$ by $2m$, one can get the upper bound of $C_S$ in (\ref{sobcst}).

\end{remark}

 The rest of the paper is organized as follows.
In Section 2, we will state and prove the Bochner type formula for the gradient of $(m, 0)$ forms.
It is well known, c.f. \cite{MK}, that the Bochner formula for all $(p, 0)$ forms with $p=1, 2, \cdots , m$
involve only the Ricci curvature.  As mentioned, what we will prove here is that for the gradient of $(m, 0)$ forms, the Bochner formula involve only the Ricci curvature and gradient of the scalar curvature.
With this formula in hand, we will roughly follow the steps in \cite{Li:1} and \cite{WZ} to prove Theorem
\ref{thmain}. One difference is that we need to find a reasonable condition such that the first eigenvalue $\lambda_1$ has a positive lower bound, which as mentioned could be $0$. It turns out that this is related to the first positive eigenvalue of the scalar Laplacian and the total scalar curvature and hence the K\"ahler class. Then we need to prove a mean value inequality for the eigenfunctions and their gradients. These will be used in an algebraic iteration process to prove the lower bound.

In Section 3, we extend some gradient estimates to the corresponding heat equation.  Just like the Hodge Laplacian, heat equation on $(m, 0)$ forms is of interest to several areas.  See for example a recent paper \cite{MMZ}.



\section{New Bochner formula and elliptic estimates}

In this section we will first present the Bochner formula as the main lemma.  Then Theorem \ref{thmain} will be proven at the end of the section after a number of intermediate results stated as lemmas.

We will need of the following basic formulas.

Denote by $\{z_1, z_2,\cdots, z_m\}$ a local holomorphic coordinate system. Let the Hermitian metric be \[\displaystyle h= \sum_{i,j=1,2,\cdots,m}g_{i\bar{j}}dz^i \otimes d\bar{z}^{j},\] where the underlying Remannian metric is given by $g=2 Re(h)$. The Christoffel symbols are given by
\[
\Gamma^{i}_{jk}=g^{i\bar{l}}\frac{\pa g_{j\bar{l}}}{\pa z_k}=g^{i\bar{l}}\frac{\pa g_{k\bar{l}}}{\pa z_j}.
\]
The curvature and Ricci curvature tensors are defined by
\be\label{curvature}
[\d_{\frac{\partial }{\partial z_i}},\d_{\frac{\partial }{\partial \g z_j}}]\frac{\pa}{\pa z_k}=-\frac{\pa \Gamma^l_{ik}}{\pa \bar{z}_j}\frac{\pa}{\pa z_l}=R_{i\bar{j}k}^l\frac{\pa}{\pa z_l},\qquad [\d_{\frac{\partial }{\partial z_i}},\d_{\frac{\partial }{\partial \g z_j}}]dz^l=\frac{\pa \Gamma^l_{ik}}{\pa \bar{z}_j}dz^k=-R_{i\bar{j}k}^ldz^k,
\ee
\be
g_{s\bar{l}}R^s_{i\bar{j}k}=R_{i\bar{j}k\bar{l}}, \ g^{l\bar{s}}R_{i\bar{j}k\bar{s}}=R^l_{i\bar{j}k},
\ee
and
\be\label{Ricci curvature}
R_{i\bar{j}}=g^{k\bar{l}}R_{i\bar{j}k\bar{l}}=R^{k}_{i\bar{j}k}=-\partial_i\partial_{\g j} \log\det(g).
\ee
Here and in the following, Einstein summation convention is used, namely, repeated indices are implicitly summed over. Readers can also refer to the book of Morrow-Kodaira \cite{MK} Chapter 3 for more details and formulas for K\"ahler geometry. Notice that the curvature tensor in \cite{MK} differs from the one defined in \eqref{curvature} by a negative sign.

Thus, for any $(p,q)$ form
\be\label{def form}
\phi=\sum_{\substack{i_1<i_2\cdots<i_p\\ j_1<j_2<\cdots<j_q}}\phi_{i_1\cdots i_p\bar{j}_1\cdots \bar{j}_q}dz^{i_1}\wedge \cdots \wedge dz^{i_p}\wedge d\bar{z}^{j_1}\wedge\cdots \wedge d\bar{z}^{j_q},
\ee
the following Ricci identity holds in local normal coordinates.
\be\label{Ricci identity}
\al
&(\d_a\d_{\bar{b}}-\d_{\bar{b}}\d_a)\phi\\
=&-\sum_{t=1}^p R^l_{a\bar{b}i_t}\phi_{i_1\cdots i_{t-1}li_{t+1}\cdots i_p \bar{j}_1\cdots \bar{j}_{q}}-\sum_{t=1}^q R^{\bar{l}}_{a\bar{b}\bar{j}_t}\phi_{i_1\cdots i_p \bar{j}_1\cdots \bar{j}_{t-1}\bar{l}\ \bar{j}_{t+1}\cdots\bar{j}_{q}}\\
=&-\sum_{t=1}^p R_{a\bar{b}i_t\bar{l}}\phi_{i_1\cdots i_{t-1}li_{t+1}\cdots i_p \bar{j}_1\cdots \bar{j}_{q}}-\sum_{t=1}^q R_{a\bar{b}\bar{j}_tl}\phi_{i_1\cdots i_p \bar{j}_1\cdots \bar{j}_{t-1}\bar{l}\ \bar{j}_{t+1}\cdots\bar{j}_{q}}.
\eal
\ee Similar identities also hold for $p$ covariant and $q$ contravariant tensors.
The K\"ahler form and Ricci form are
\be\label{Ricci curvature}
\omega=\frac{\sqrt{-1}}{2}g_{i\bar{j}}dz^i\wedge d{\g z}^{j}\ , \textrm{and} \ \rho=\frac{\sqrt{-1}}{2}R_{i\bar{j}}dz^i\wedge d{\g z}^j,
\ee
respectively.

Also, during the course of proofs below, there are situations in which complex gradient and Laplacian on real functions need to be transferred into the real ones, e.g. using min-max definition to get the first nonzero eigenvalue of the Laplacian on functions. For this purpose, be aware that
$$\frac{\partial }{\partial z_i}=\frac{1}{2}\left(\frac{\partial }{\partial x_i}-\sqrt{-1}\frac{\partial }{\partial y_i}\right),$$
and
\[
g_{i\bar{j}}=\frac{1}{2}\left(h_{ij}+\sqrt{-1}h_{i,m+j}\right),\ and\ g^{i\bar{j}}=2\left(h^{ij}-\sqrt{-1}h^{i,m+j}\right)
\]
where $z_i=x_i+\sqrt{-1}y_i$, $h_{ij}=h(\frac{\partial}{\partial x_i}, \frac{\partial}{\partial x_j})$, and $h_{i,m+j}=h(\frac{\partial}{\partial x_i}, \frac{\partial}{\partial y_j})$. Thus, for a real function $f$, we have
\[
\al|\d f|^2=&|\d_if|^2+|\d_{\g i} f|^2=2g^{i\bar{j}}\d_if\d_{\g j}f \\
=& h^{ij}\d_{\frac{\partial}{\partial x_i}}f\d_{\frac{\partial}{\partial x_j}}f+h^{m+i,m+j}\d_{\frac{\partial}{\partial y_i}}f\d_{\frac{\partial}{\partial y_j}}f+h^{i,m+j}\d_{\frac{\partial}{\partial x_i}}f\d_{\frac{\partial}{\partial y_j}}f +h^{m+i,j}\d_{\frac{\partial}{\partial y_i}}f\d_{\frac{\partial}{\partial x_j}}f,
\eal\]
and
\[
\al
\Delta f=&\frac{1}{2}g^{i\bar{j}}\left(\d_i\d_{\g j}+\d_{\g j}\d_i\right) f\\
=&\frac{1}{2}\left(h^{ij}\d_{\frac{\partial }{\partial x_i}}\d_{\frac{\partial }{\partial x_j}}+h^{m+i,m+j}\d_{\frac{\partial }{\partial y_i}}\d_{\frac{\partial }{\partial y_i}}+h^{i,m+j}\d_{\frac{\partial}{\partial x_i}}\d_{\frac{\partial}{\partial y_j}} +h^{m+i,j}\d_{\frac{\partial}{\partial y_i}}\d_{\frac{\partial}{\partial x_j}}\right)f.
\eal
\]
In other words, the norm of the complex and real gradients of $f$ are the same, and the complex Laplacian is half of the real Laplacian.

The next lemma presents a Bochner type formula involving only the Ricci curvature and the gradient of the scalar curvature. As mentioned, usually such results are expected to hold only for scalar functions. Also it does not follow from the Bochner formula for $(m, 1)$ forms where only the Ricci curvature appears. The reason is that the covariant derivative of an $(m, 0)$ form is not necessarily an $(m, 1)$ form. Later we will show that, in many situations, the gradient of the  scalar curvature can be integrated out, leaving the dependence only on Ricci curvature.

\begin{lemma} [Main Lemma]
\label{Bochner}
Let $\phi$ be a smooth $(m,0)$ form on a K\"ahler manifold $\M$ of complex dimension $m$. Then
\be
\lab{bochM}
\al
\Delta|\d \phi|^2=&|\d^2\phi|^2+<\d\Delta\phi, \d\phi>+<\d\phi,\d\Delta\phi>+R|\d\phi|^2+<\phi\d^{1,0}R, \d^{1,0}\phi>\\
&\ +<\d^{1,0}\phi, \phi\d^{1,0} R>+3Ric(\d^{1,0}\phi,\d^{1,0}\phi)-Ric(\d^{0,1}\phi,\d^{0,1}\phi)\\
 \equiv & |\d^2\phi|^2+<\d\Delta\phi, \d\phi>+<\d\phi,\d\Delta\phi>+R|\d\phi|^2+<\phi\d_jR,\d_{j}\phi>\\
&\  +<\d_j\phi, \phi\d_j R>+3<R_{j\bar{k}}\d_{k}\phi,\d_{j}\phi>-<R_{k\bar{j}}\d_{\bar{k}}\phi, \d_{\bar{j}}\phi>;
\eal
\ee
where $R=g^{i\bar{j}}R_{i\bar{j}}$ is the scalar curvature of $\M$, and $\d^{1,0}\phi=\d_i\phi_{12\cdots m}dz^1\wedge dz^2\wedge\cdots \wedge dz^m\otimes dz^i$ and $\d^{0,1}\phi=\d_{\g i}\phi_{12\cdots m}dz^1\wedge dz^2\wedge\cdots \wedge dz^m\otimes d\bar{z}^i$.

In particular, for K\"ahler-Einstein manifold $R_{i\g j}= \mu g_{i\g j}$, $\mu=\frac{R}{m}$, the Bochner formula becomes
\be\label{bochke}
\Delta|\d \phi|^2=|\d^2\phi|^2+<\d\Delta\phi, \d\phi>+<\d\phi,\d\Delta\phi>+m\mu|\d\phi|^2+3\mu|\d_{j}\phi|^2-\mu|\d_{\bar{j}}\phi|^2.
\ee
Furthermore, if $\M$ is a Calabi-Yau manifold, i.e., $\mu=0$, then the following curvature free identity holds:
\be
\lab{bochcy}
\Delta|\d \phi|^2=|\d^2\phi|^2+<\d\Delta\phi, \d\phi>+<\d\phi,\d\Delta\phi>.
\ee

\end{lemma}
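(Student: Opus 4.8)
The plan is to compute $\Delta |\nabla\phi|^2$ by brute force in local normal coordinates, where $\phi = f\, dz^1\wedge\cdots\wedge dz^m$ with $f = \phi_{12\cdots m}$ a local (complex-valued) function, and then organize the resulting terms so that all curvature contributions reduce, via the Ricci identity \eqref{Ricci identity} and the Kähler identity \eqref{Ricci curvature}, to the Ricci tensor and $\nabla R$. First I would note that in normal coordinates at a point, $|\nabla\phi|^2 = \sum_i |\nabla_i\phi|^2 + \sum_i |\nabla_{\bar i}\phi|^2 = \sum_i |\nabla_i f|^2 + \sum_i |\nabla_{\bar i}f|^2$, except that this is only true \emph{at the center of the normal coordinate system}; away from it the metric coefficients of $\Lambda^m(T^*\M)$ matter, so I must be careful to differentiate the full expression $|\nabla\phi|^2$ and only evaluate at the center at the very end. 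Concretely, one writes $\nabla_a\phi$ as an $(m,0)$-form-valued $(1,0)$-covector and similarly $\nabla_{\bar a}\phi$, so that $|\nabla\phi|^2 = \langle \nabla_a\phi, \nabla_a\phi\rangle + \langle \nabla_{\bar a}\phi, \nabla_{\bar a}\phi\rangle$ where the inner product includes the metric on $K_\M$, whose curvature is precisely $-R_{i\bar j}$ by \eqref{Ricci curvature}. Then $\Delta = \nabla_k\nabla_{\bar k}$ (symmetrized, but at the center normal-coordinate simplifications apply), and I apply the product rule twice.

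The key algebraic steps, in order, are: (i) expand $\nabla_k\nabla_{\bar k}\langle\nabla_a\phi,\nabla_a\phi\rangle = \langle\nabla_k\nabla_{\bar k}\nabla_a\phi,\nabla_a\phi\rangle + \langle\nabla_a\phi,\nabla_k\nabla_{\bar k}\nabla_a\phi\rangle + \langle\nabla_{\bar k}\nabla_a\phi,\nabla_{\bar k}\nabla_a\phi\rangle + \langle\nabla_k\nabla_a\phi,\nabla_k\nabla_a\phi\rangle$, and likewise for the $\nabla_{\bar a}\phi$ piece; the two "$|\nabla^2|$" terms assemble into $|\nabla^2\phi|^2$. (ii) In the terms $\langle\nabla_k\nabla_{\bar k}\nabla_a\phi,\cdot\rangle$, commute $\nabla_{\bar k}$ past $\nabla_a$ using the Ricci identity to produce a curvature term plus $\nabla_a\nabla_{\bar k}\nabla_a\phi$; then commute the remaining $\nabla_k$ past the outer $\nabla_a$ (again Ricci identity, picking up curvature) to recognize $\nabla_a\Delta\phi$ — this yields the $\langle\nabla\Delta\phi,\nabla\phi\rangle$ and $\langle\nabla\phi,\nabla\Delta\phi\rangle$ terms. (iii) Collect all curvature terms: they come both from these commutations on the covariant derivatives of $f$ and from the curvature $-R_{i\bar j}$ of the canonical bundle acting on the form part. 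Crucially, because the bundle curvature involves a contraction that produces the \emph{scalar} curvature $R$ (hence the $R|\nabla\phi|^2$ term), while the Ricci identity on the $(1,0)$-covector index of $\nabla_a\phi$ produces $R_{k\bar j}$-type terms (hence the $3\mathrm{Ric}(\nabla^{1,0}\phi,\nabla^{1,0}\phi) - \mathrm{Ric}(\nabla^{0,1}\phi,\nabla^{0,1}\phi)$ asymmetry), one must track the signs and the $(1,0)$ vs $(0,1)$ placement meticulously. (iv) The terms $\langle\phi\,\nabla_j R, \nabla_j\phi\rangle$ and its conjugate arise when a second covariant derivative lands on a curvature coefficient $R_{i\bar j}$ coming from the bundle metric — i.e., from $\nabla R = \nabla(g^{i\bar j}R_{i\bar j})$ via the contracted second Bianchi identity — this is the one place where the gradient of scalar curvature genuinely cannot be avoided because the canonical bundle curvature $-R_{i\bar j}$ is itself being differentiated.

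I expect the main obstacle to be step (iii)–(iv): correctly bookkeeping \emph{all} the places a curvature tensor or its derivative appears — from commuting covariant derivatives on the $f$-indices (two sources), from the curvature of $K_\M$ acting on $\nabla\phi$ as a section-valued tensor (one source), and from differentiating that bundle curvature (one source) — and verifying that after using $R^k_{i\bar j k} = R_{i\bar j}$, the Kähler symmetries $R_{i\bar j k\bar l} = R_{k\bar j i\bar l} = R_{i\bar l k\bar j}$, and the Bianchi identity $\nabla_i R = g^{k\bar l}\nabla_k R_{i\bar l}$, the full curvature tensor genuinely cancels and only the combination stated in \eqref{bochM} survives — in particular the asymmetric coefficients $3$ and $-1$ in front of the two Ricci terms. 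Once \eqref{bochM} is established, the Kähler–Einstein case \eqref{bochke} follows by substituting $R_{i\bar j} = \mu g_{i\bar j}$, $R = m\mu$, and $\nabla R = 0$, and the Calabi–Yau case \eqref{bochcy} is the further specialization $\mu = 0$; these two reductions are immediate and require no further work.
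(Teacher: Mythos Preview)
Your proposal is essentially the paper's own proof: compute at the center of normal coordinates, split $|\nabla\phi|^2$ into its $(1,0)$ and $(0,1)$ pieces, expand $\Delta$ by the product rule to extract $|\nabla^2\phi|^2$, commute covariant derivatives via the Ricci identity \eqref{Ricci identity} to produce $\langle\nabla\Delta\phi,\nabla\phi\rangle$ plus curvature terms, and then observe that the $(m,0)$-form indices force every curvature contraction to collapse to $R_{i\bar j}$ or $R$ (your ``bundle curvature of $K_\M$'' framing is exactly the paper's identity \eqref{ji-ij}). One small correction to your step (ii): on a K\"ahler manifold $[\nabla_k,\nabla_a]=0$ for two $(1,0)$ derivatives since the curvature is purely of type $(1,1)$, so that second commutation is free and picks up no curvature---this actually simplifies the bookkeeping you anticipate in step (iii).
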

\proof

We may compute at the center of a local normal coordinate system with
\[
\phi = \phi_{12\cdots m} dz^1 \wedge dz^2 \wedge \cdots \wedge dz^m.
\]To simplify the presentation, we will drop the $dz^i$ below.

First of all,
\be
\lab{12}
\al
\Delta|\d \phi|^2&=\d_i\d_{\bar{i}}(\d_j\phi_{12\cdots m}\overline{\d_{j}\phi_{12\cdots m}})+\d_i\d_{\bar{i}}(\d_{\bar{j}}\phi_{12\cdots m}\overline{\d_{\bar{j}}\phi_{12\cdots m}})\\
&\equiv I + II.
\eal
\ee
By direct computation, the first term on the RHS can be written as
\be
\lab{I=1}
\al
I=&\d_i\d_{\bar{i}}(\d_j\phi_{12\cdots m}\overline{\d_{j}\phi_{12\cdots m}})\\
=&\d_i\d_{\bar{i}}\d_j\phi_{12\cdots m}\overline{\d_{j}\phi_{12\cdots m}}+\d_j\phi_{12\cdots m}\overline{\d_{\bar{i}}\d_i\d_{j}\phi_{12\cdots m}}\\
& +\d_{\bar{i}}\d_j\phi_{12\cdots m}\overline{\d_{\bar{i}}\d_{j}\phi_{12\cdots m}}+\d_i\d_j\phi_{12\cdots m}\overline{\d_i\d_{j}\phi_{12\cdots m}}.
\eal
\ee Therefore
\be
\lab{I=2}
\al
I
=&\d_i\d_{\bar{i}}\d_j\phi_{12\cdots m}\overline{\d_{j}\phi_{12\cdots m}}+\d_j\phi_{12\cdots m}\overline{\d_{\bar{i}} \d_j \d_i\phi_{12\cdots m}}\\
& + |\d_{\bar{i}}\d_j\phi |^2+ |\d_i\d_j\phi|^2.
\eal
\ee

By \eqref{Ricci identity}, the following identity holds
\be\label{ji-ij}
\al
\d_j \d_{\g i} \phi - \d_{\g i} \d_j \phi
=& -R_{j\g i}\phi_{12\cdots m},
\eal
\ee
where $\phi_{i_1i_2\cdots i_m}$ with repeating subindices are zero by the definition of $(p,q)$ forms \eqref{def form}.
Also, it follows from \eqref{def form} and \eqref{Ricci identity} that
\be\label{ji-ij1}
\al
(\d_{\bar{i}}\d_j-\d_{j}\d_{\bar{i}})\d_i\phi=&-R_{\bar{i}ji\bar{l}}\d_l\phi_{12\cdots m}-\sum_kR_{\bar{i}jk\bar{l}}\d_i\phi_{1\cdots k-1 l k+1\cdots m}\\
=& -R_{\bar{i}ji\bar{l}}\d_l\phi_{12\cdots m}+R_{j\g i}\d_i\phi_{12\cdots m}.
\eal
\ee
Applying \eqref{ji-ij} and \eqref{ji-ij1} to the righthand side of (\ref{I=2}), we find
\be
\lab{I=}
\al
I=&\d_i\left(\d_j\d_{\bar{i}}\phi_{12\cdots m}+ R_{\g i j} \phi_{12\cdots m} \right) \overline{\d_{j}\phi_{12\cdots m}}\\ &+ \d_j\phi_{12\cdots m}\left(\overline{\d_{j}\d_{\bar{i}}\d_i\phi_{12\cdots m}-R_{\bar{i}ji\bar{l}}\d_l\phi_{12\cdots m}+R_{j\g i}\d_i\phi_{12\cdots m}}\right)\\
&+ |\d_i\d_{\bar{j}}\phi|^2+|\d_i\d_j\phi|^2.\\
\eal
\ee Recall that for $(m, 0)$ forms the complex Laplacian is $\Delta =\d_i \d_{\g i}$. Moreover, by the definition of the Ricci curvature \eqref{Ricci curvature} and Bianchi identity, one can see that
\[
R_{\bar{i}ji\bar{l}}=-R_{i\bar{i}j\bar{l}}=-R_{j\bar{l}};
\]
Therefore we can apply these properties and (\ref{ji-ij}) on the second term of the RHS of identity
\eqref{I=} to deduce:
\[
\al
I
=& \left(\d_j\Delta\phi_{12\cdots m}+\d_i(R_{\bar{i}j}\phi_{12\cdots m})\right)\overline{\d_{j}\phi_{12\cdots m}}\\
&+ \d_j\phi_{12\cdots m}\left(\overline{\d_{j}\Delta\phi_{12\cdots m}+\d_j(R \phi_{12\cdots  m})+2R_{j\bar{l}}\d_l\phi_{12\cdots m}}\right)\\
&+|\d_i\d_{\bar{j}}\phi|^2+|\d_i\d_j\phi|^2\\
=&\d_j\Delta\phi_{12\cdots m}\overline{\d_{j}\phi_{12\cdots m}}+\d_j\phi_{12\cdots m}\overline{\d_{j}\Delta\phi_{12\cdots m}} +(\d_jR) \, \phi_{12\cdots m}\overline{\d_j\phi_{12\cdots m}}\\
&+3R_{\bar{i}j}\d_i\phi_{12\cdots m}\overline{\d_{j}\phi_{12\cdots m}}+ \d_j\phi_{12\cdots m} \overline{\phi_{12\cdots m}} (\d_{\bar{j}}R) + R|\d_i \phi|^2+|\d_i\d_{\bar{j}}\phi|^2+|\d_i\d_j\phi|^2.
\eal
\] Here we have also used the identities
\be
\lab{3ident}
\d_i R_{\g i j} =\d_j R,  \quad \textrm{and}\quad
\overline{R_{\g i j}} = R_{\g j i}=R_{i\bar{j}}.
\ee

Similarly, the second term on the RHS of (\ref{12}) is
\[
\al
II=&\d_i\d_{\bar{i}}(\d_{\bar{j}}\phi_{12\cdots m}\overline{\d_{\bar{j}}\phi_{12\cdots m}})\\
=&\d_i\d_{\bar{i}}\d_{\bar{j}}\phi_{12\cdots m}\overline{\d_{\bar{j}}\phi_{12\cdots m}}+\d_{\bar{j}}\phi_{12\cdots m}\overline{\d_{\bar{i}}\d_i\d_{\bar{j}}\phi_{12\cdots m}}\\
& +\d_{\bar{i}}\d_{\bar{j}}\phi_{12\cdots m}\overline{\d_{\bar{i}}\d_{\bar{j}}\phi_{12\cdots m}}+\d_i\d_{\bar{j}}\phi_{12\cdots m}\overline{\d_i\d_{\bar{j}}\phi_{12\cdots m}}.
\eal
\]Thus
\[
\al
II=& \d_{\bar{j}}\Delta\phi_{12\cdots m}\overline{\d_{\bar{j}}\phi_{12\cdots m}}-R_{i\bar{j}\bar{i}l}\d_{\bar{l}}\phi_{12\cdots m}\overline{\d_{\bar{j}}\phi_{12\cdots m}}-\sum_kR_{i\bar{j}k\bar{l}}\d_{\bar{i}}\phi_{1\cdots k-1 l k+1\cdots m}\overline{\d_{\bar{j}}\phi_{12\cdots m}}\\
&+ \d_{\bar{j}}\phi_{12\cdots m}\left(\overline{\d_{\bar{j}}\d_{\bar{i}}\d_i\phi_{12\cdots m}-\d_{\bar{i}}(\sum_kR_{i\bar{j}k\bar{l}}\phi_{1\cdots k-1 l k+1\cdots m})}\right) +|\d_{\bar{i}}\d_{\bar{j}}\phi|^2+|\d_i\d_{\bar{j}}\phi|^2\\
=& \d_{\bar{j}}\Delta\phi_{12\cdots m}\overline{\d_{\bar{j}}\phi_{12\cdots m}}+\d_{\bar{j}}\phi_{12\cdots m}\left(\overline{\d_{\bar{j}}\Delta\phi_{12\cdots m}+\d_{\bar{j}}(R\phi_{12\cdots m})}\right)-\d_{\bar{j}}\phi_{12\cdots m} \overline{\phi_{12\cdots m}}\d_jR\\
&\ -R_{\bar{i}j}\d_{\bar{j}}\phi_{12\cdots m}\overline{\d_{\bar{i}}\phi_{12\cdots m}}+|\d_{\bar{i}}\d_{\bar{j}}\phi|^2+|\d_i\d_{\bar{j}}\phi|^2\\
=& \d_{\bar{j}}\Delta\phi_{12\cdots m}\overline{\d_{\bar{j}}\phi_{12\cdots m}}+\d_{\bar{j}}\phi_{12\cdots m}\overline{\d_{\bar{j}}\Delta\phi_{12\cdots m}} +R|\d_{\bar{i}} \phi|^2-R_{\bar{i}j}\d_{\bar{j}}\phi_{12\cdots m}\overline{\d_{\bar{i}}\phi_{12\cdots m}}\\
&\ +|\d_{\bar{i}}\d_{\bar{j}}\phi|^2+|\d_i\d_{\bar{j}}\phi|^2.
\eal
\] Note that $|\d \phi |^2 = |\d_i \phi|^2 + |\d_{\g i} \phi|^2$. Combining the computations above for I and II  with (\ref{12}) finishes the proof of the Lemma.\qed
\medskip

 Next we turn to gradient and eigenvalue estimates for the Hodge Laplacian on $(m, 0)$ forms and the proof of Theorem \ref{thmain} which is divided into a number of lemmas. The following is a flow chart of the proof: 1. mean value inequality for eigenforms; 2. mean value inequality for the gradient of eigenforms; 3.
bounds of sum of the first $k$ eigenvalue by a power of the last one via integration of the mean value inequality; 4. positive lower bound of $\lambda_1$ under a mild condition; 5. lower bound for all eigenvalues.

As a start, applying Moser's iteration, we will prove a mean value inequality  for $(m,0)$ eigenforms of the Hodge Laplacian. Only the scalar curvature is involved since it is well known that, formula (\ref{bochm0}) below i.e., the Bochner formula for $(m, 0)$ forms, involves only the scalar curvature. c.f. \cite{MK} Chapter 3.

\begin{lemma}\label{MVI}
Let $(\M, g_{i\bar{j}})$ be a compact K\"ahler manifold of complex dimension $m\geq 2$. Suppose that the scalar curvature $R$ of $\M$ satisfies $R \geq -mK$, for some $K\geq0$. Let $\lambda\geq0$ be an eigenvalue of the Hodge Laplacian on $(m,0)$ forms, and $\phi=\phi_{12\cdots m}dz^1\wedge\cdots\wedge dz^m$ an eigenform associated with $\lambda$, i.e.
\be
\lab{evf}
\Delta_d \phi = \lam \phi.
\ee Assume further that the following Sobolev inequality is satisfied,
\be\label{sobolev}
\left(\int_\M u^{\frac{2m}{m-1}}\right)^{\frac{m-1}{m}}\leq C_S\left(\int_\M |\d u|^2+ |\M|^{-\frac{1}{m}}\int_\M u^2\right)
\ee for all smooth functions on $\M$.
Then we have the mean value inequality
\be\label{estimate phi}
\al
\max_\M|\phi|^2\leq 4^{m(m-1)}C_S^{m}(\lambda+mK+|\M|^{-\frac{1}{m}})^m\int_\M|\phi|^2.
\eal
\ee
\end{lemma}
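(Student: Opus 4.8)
The plan is to run a standard Moser iteration on the scalar quantity $|\phi|^2$, using the Bochner formula for $(m,0)$ forms (formula \eqref{bochm0}) which, crucially, involves only the scalar curvature. First I would record this Bochner identity: for an eigenform $\phi$ satisfying $\Delta_d\phi=\lambda\phi$, i.e. $\Delta\phi=-\tfrac12\lambda\phi$ in the complex rough Laplacian normalization, one has schematically $\Delta|\phi|^2 = |\d\phi|^2 + \langle\Delta\phi,\phi\rangle + \langle\phi,\Delta\phi\rangle + (\text{curvature})|\phi|^2$, where the curvature term is a multiple of $R$ (not the full tensor). Using $R\ge -mK$ and $\Delta\phi = -\tfrac12\lambda_d\phi$ this gives a differential inequality of the form $\Delta|\phi|^2 \ge -(c_1\lambda + c_2 mK)|\phi|^2$ for explicit small constants, or equivalently $-\Delta|\phi|^2 \le (\lambda + mK)|\phi|^2$ after absorbing the normalization factor; the key point is just that $u:=|\phi|^2\ge0$ is a nonnegative subsolution: $-\Delta u \le (\lambda+mK)u$, with $\Delta$ here half the real Laplacian.

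Next I would set up the iteration. For $p\ge 1$ multiply the inequality $-\Delta_{\mathrm{real}} u \le 2(\lambda+mK)u$ by $u^{p-1}$ and integrate by parts over the closed manifold $\M$; this produces $\frac{4(p-1)}{p^2}\int_\M |\d(u^{p/2})|^2 \le 2(\lambda+mK)\int_\M u^p$, hence a bound on $\int|\d(u^{p/2})|^2$ in terms of $\int u^p$. Feeding this into the Sobolev inequality \eqref{sobolev} applied to the function $u^{p/2}$ yields
\be
\left(\int_\M u^{\frac{mp}{m-1}}\right)^{\frac{m-1}{m}} \le C_S\Big(p\,(\lambda+mK) + |\M|^{-\frac1m}\Big)\int_\M u^p,
\ee
up to harmless numerical factors coming from the $\frac{p^2}{4(p-1)}$ term (which for $p\ge1$ is bounded by a constant times $p$). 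Writing $\chi:=\frac{m}{m-1}$ and iterating with $p_k=\chi^k$, taking $k\to\infty$, one obtains $\sup_\M u \le \big(\prod_k (\text{const}\cdot\chi^k(\lambda+mK+|\M|^{-1/m}))^{\chi^{-k}}\big)\int_\M u$. The product of the constant and the $(\lambda+mK+|\M|^{-1/m})$ factors contributes $C^{\sum\chi^{-k}} = C^{m}$ with $C = \text{const}\cdot C_S\cdot(\lambda+mK+|\M|^{-1/m})$, and the $\prod(\chi^k)^{\chi^{-k}}$ piece is an absolute dimensional constant; tracking these carefully should give exactly the stated factor $4^{m(m-1)}C_S^m(\lambda+mK+|\M|^{-1/m})^m$.

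The main obstacle — really the only non-bookkeeping point — is establishing the correct subsolution inequality for $u=|\phi|^2$ from the $(m,0)$-Bochner formula with the right sign and the right constant in front of $(\lambda+mK)$, in particular correctly converting between the complex rough Laplacian $\Delta=\d_i\d_{\bar i}$ used throughout and the real Hodge Laplacian $\Delta_d$ appearing in \eqref{evf} (recall $\Delta=-\tfrac12\Delta_d$ on $(m,0)$ forms), and handling the $\langle\Delta\phi,\phi\rangle+\langle\phi,\Delta\phi\rangle = -\lambda|\phi|^2$ term so that it combines with the curvature term rather than fighting it. Once $-\Delta_{\mathrm{real}} u \le 2(\lambda+mK)u$ is in hand, the rest is the textbook Moser iteration on a closed manifold, and the only care needed is to keep the constants explicit so that the final exponent of $C_S$ is $m$ and the crude numerical constant comes out as $4^{m(m-1)}$ (which is generous enough to absorb the $\prod \chi^{k\chi^{-k}}$ and the $\tfrac{p^2}{4(p-1)}$ factors). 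I would also note that no boundary terms appear since $\M$ is compact without boundary, so every integration by parts is clean.
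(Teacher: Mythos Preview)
Your approach is essentially the paper's --- Bochner identity for $(m,0)$ forms plus Moser iteration --- but there is one genuine gap. By passing from the Bochner identity to the bare subsolution inequality $-\Delta_{\mathrm{real}} u \le 2(\lambda+mK)u$ you have discarded the term $|\nabla\phi|^2$, and with it the information needed to start the iteration at $p=1$. Multiplying by $u^{p-1}$ and integrating by parts yields $\frac{4(p-1)}{p^2}\int |\nabla u^{p/2}|^2 \le 2(\lambda+mK)\int u^p$, so the gradient control involves the factor $\frac{p^2}{4(p-1)}$; your claim that this is ``bounded by a constant times $p$'' for $p\ge 1$ is false, since it blows up as $p\to 1^+$. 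Consequently you cannot bound $\int |\nabla |\phi||^2$ in terms of $\int |\phi|^2$, and the iteration never gets off the ground from $L^1$ of $u$ (i.e.\ $L^2$ of $\phi$) --- you would only control $\sup |\phi|^2$ by some higher $L^q$ norm.

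The paper's fix is exactly to \emph{retain} the gradient term via Kato's inequality: from $\Delta|\phi|^2 = |\nabla\phi|^2 - \lambda|\phi|^2 + R|\phi|^2 \ge \big|\nabla|\phi|\big|^2 - (\lambda+mK)|\phi|^2$, multiplying by $2|\phi|^{2p-2}$ and integrating gives $\int |\nabla|\phi|^p|^2 \le \frac{p^2(\lambda+mK)}{4p-3}\int |\phi|^{2p}$, and now the coefficient $\frac{p^2}{4p-3}$ is finite (equal to $1$) at $p=1$. With this single change your iteration goes through exactly as you describe, and the bookkeeping $\sum_{k\ge 0}\alpha^{-k}=m$, $\sum_{k\ge 0} k\alpha^{-k}=m(m-1)$ produces the stated constants.
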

\proof

Recall that $\Delta=-\frac{1}{2}\Delta_d$ on functions. Hence, we have
\[
\al
-\Delta_d |\phi|^2 &= 2 \Delta |\phi|^2\\
& =
2 <\nabla_j \phi, \nabla_j \phi> + 2 <\nabla_{\bar j} \phi, \nabla_{\bar j} \phi>
+2 <\nabla_j \nabla_{\g j} \phi, \phi> +2 <\phi, \nabla_{\g j} \nabla_{j} \phi>.
\eal
\]For the last term on the right side, using \eqref{Ricci identity} for $(m, 0)$ forms yields the known identity:
\be
\lab{bochm0}
\al
-\Delta_d|\phi|^2(x)=2|\d\phi|^2(x)-<\Delta_d \phi,\phi>(x)-<\phi,\Delta_d\phi>(x)+2R(x)|\phi|^2(x).
\eal
\ee From (\ref{evf})
and Kato's inequality, we have
\be\label{Delta phi^2}
\al
\Delta|\phi|^2=-\frac{1}{2}\Delta_d|\phi|^2&=|\d\phi|^2-\lambda|\phi|^2+R|\phi|^2\\
&\geq \left|\d|\phi|\right|^2-(\lambda+mK)|\phi|^2.
\eal
\ee
Hence,
\[
\al
\int_\M 2|\phi|^{2p-2}\Delta|\phi|^2\geq \int_\M \left[|\phi|^{2p-2}\left|\d|\phi|\right|^2-(\lambda+mK)|\phi|^{2p}\right].
\eal
\]
Using integration by parts on the LHS, one gets that for $p\geq 1$,
\[
\int_\M -\frac{4(p-1)}{p^2}|\d|\phi|^p|^2\geq \int_\M \left[\frac{1}{p^2}\left|\d|\phi|^p\right|^2-(\lambda+mK)|\phi|^{2p}\right],
\]
i.e.,
\[
\int_\M |\d|\phi|^p|^2 \leq \frac{p^2(\lambda+mK)}{4p-3} \int_\M |\phi|^{2p}.
\]

Let $\alpha=\frac{m}{m-1}$, according to the Sobolev inequality \eqref{sobolev}, we have
\[
\al
\left(\int_\M |\phi|^{2p\alpha}\right)^{1/\alpha}&\leq C_S\left[\frac{p^2(\lambda+mK)}{4p-3}+|\M|^{-\frac{1}{m}}\right]\int_\M |\phi|^{2p}\\
&\leq p^2C_S(\lambda+mK+|\M|^{-\frac{1}{m}})\int_\M |\phi|^{2p}=\tilde{C}p^2\int_\M |\phi|^{2p},
\eal
\]
where $\tilde{C}=C_S(\lambda+mK+|\M|^{-\frac{1}{m}})$.
Setting $p=\alpha^{k-1}$, $k=1,2,3,\cdots$, it implies that
\[
\al
\left(\int_\M |\phi|^{2\alpha^k}\right)^{1/\alpha^k}&\leq \tilde{C}^{\alpha^{-(k-1)}}\alpha^{2(k-1)\alpha^{-(k-1)}}\left(\int_\M|\phi|^{2\alpha^{k-1}}\right)^{1/\alpha^{k-1}}\\
&\leq \tilde{C}^{\alpha^{-(k-1)}}4^{(k-1)\alpha^{-(k-1)}}\left(\int_\M|\phi|^{2\alpha^{k-1}}\right)^{1/\alpha^{k-1}}
\eal
\]
Therefore,
\[
\al
\max_\M|\phi|^2&\leq \tilde{C}^{\sum_{k=1}^{\infty}\alpha^{-(k-1)}}4^{\sum_{k=1}^{\infty}(k-1)\alpha^{-(k-1)}}\int_\M|\phi|^2
=4^{m(m-1)}\tilde{C}^{m}\int_\M|\phi|^2
\eal
\]
This finishes the proof. \qed\\

In particular, by normalizing $\phi$ so that $||\phi||_{L^2}=1$, Lemma \ref{MVI} implies that
\be\label{phi upper bound}
|\phi|^2\leq 4^{m(m-1)}C_S^{m}(\lambda+mK+|\M|^{-\frac{1}{m}})^{m}.
\ee

Next, from Lemma \ref{Bochner} and Moser iteration, we are able to prove the following gradient estimate for $(m,0)$ eigenforms of the Hodge Laplacian. This time the Ricci curvature is involved.

\begin{lemma}\label{lem gradient est}
Let $(\M, g_{i\bar{j}})$ and $\phi$ be the same as in Lemma \ref{MVI}. Assume that $|Ric|\leq K$. Then
\be\label{gradient estimate1}
\max_{\M} |\d\phi|^2\leq 4^{m(m+1)}m^m(m+1)^{m+1}C_S^m\left[\lambda+K+|\M|^{-\frac{1}{m}}\right]^m(\lambda+K)\int_{\M}|\phi|^2.
\ee

In particular, when $||\phi||_{L^2}=1$, we have
\be\label{gradient estimate}
|\d\phi|^2\leq 4^{m(m+1)}m^m(m+1)^{m+1}C_S^m\left[\lambda+K+|\M|^{-\frac{1}{m}}\right]^m(\lambda+K).
\ee
\end{lemma}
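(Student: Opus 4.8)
The starting point is the Main Lemma. Since $\phi$ is an eigenform, $\Delta_d\phi=\lambda\phi$, and on $(m,0)$ forms $\Delta=-\tfrac12\Delta_d$, so $\Delta\phi=-\tfrac\lambda2\phi$; hence in \eqref{bochM} the pair $<\d\Delta\phi,\d\phi>+<\d\phi,\d\Delta\phi>$ collapses to $-\lambda|\d\phi|^2$. Using $|\Ric|\le K$ (so that $|R|\le mK$ and $3\Ric(\d^{1,0}\phi,\d^{1,0}\phi)-\Ric(\d^{0,1}\phi,\d^{0,1}\phi)\ge-(m+3)K|\d\phi|^2$), \eqref{bochM} yields the pointwise inequality
\[
\Delta|\d\phi|^2\ \ge\ |\d^2\phi|^2-\big(\lambda+(m+3)K\big)|\d\phi|^2+<\phi\,\d^{1,0}R,\d^{1,0}\phi>+<\d^{1,0}\phi,\phi\,\d^{1,0}R>.
\]
Everything here is controlled by the Ricci curvature except the last, $\d R$-dependent, pair; removing it is the whole point of the lemma, and it will be done only after integration.

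We run Moser iteration for $h:=|\d\phi|^2$. Normalize $\|\phi\|_{L^2}=1$ (the general statement follows by homogeneity, $h$ and $\int_\M|\phi|^2$ both being quadratic in $\phi$); Lemma \ref{MVI} gives $\max_\M|\phi|^2\le A:=4^{m(m-1)}C_S^m(\lambda+mK+|\M|^{-\frac1m})^m$, and integrating \eqref{bochm0} over $\M$ as in the proof of Lemma \ref{MVI} gives $\int_\M h=\lambda-\int_\M R|\phi|^2\le\lambda+mK$. Multiply the displayed inequality by $h^{p-1}$ (for $p\ge 1$; to avoid degeneracy where $\d\phi$ vanishes, replace $h$ by $h+\eps$ and let $\eps\to0$ at the end) and integrate over the closed manifold $\M$. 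On the left, integration by parts gives $\int_\M h^{p-1}\Delta h=-\tfrac{2(p-1)}{p^2}\int_\M|\d(h^{p/2})|^2$. On the right, the $\d R$ pair is integrated by parts a second time, pushing $\d$ off $R$; since $R\,V$ is an honest vector field for $V=h^{p-1}\phi\,\overline{\d\phi}$ this is legitimate, and the resulting terms are of three types: (i) $\propto R\,h^{p-1}|\d^{1,0}\phi|^2$, bounded by $mK\int_\M h^{p}$; (ii) $\propto R\,(\d h^{p-1})\,\phi\,\overline{\d\phi}$, in which $|\d h^{p-1}|\le 2(p-1)h^{p-3/2}|\d^2\phi|$, so after Young's inequality it is absorbed partly by the nonnegative $\int_\M h^{p-1}|\d^2\phi|^2$ coming from the $|\d^2\phi|^2$ term of \eqref{bochM} and partly by $\tfrac12\cdot\tfrac{2(p-1)}{p^2}\int_\M|\d(h^{p/2})|^2$, at the cost of a term $\propto p\,K^2\int_\M h^{p-1}|\phi|^2$; and (iii) $\propto R\,h^{p-1}\,\phi\,\overline{\Delta\phi+R\phi}$, where the Ricci identity \eqref{ji-ij} has already converted the second covariant derivatives of $\phi$ produced here into $\Delta\phi+R\phi=(-\tfrac\lambda2+R)\phi$, so no second derivatives of $\phi$ survive, and this is bounded by $mK(mK+\tfrac\lambda2)\int_\M h^{p-1}|\phi|^2$. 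Thus after integration no derivative of the scalar curvature remains, and every leftover is in terms of $|\Ric|$, $\lambda$, $|\phi|$ and $h$.

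Collecting these bounds, discarding the remaining nonnegative $\int_\M h^{p-1}|\d^2\phi|^2$, and using $\max_\M|\phi|^2\le A$ together with $\int_\M h^{p-1}\le\int_\M h^p+|\M|$ (Hölder and Young), we reach an inequality of the form
\[
\int_\M|\d(h^{p/2})|^2\ \le\ C(m,K,\lambda)\,\frac{p^{2}}{p-1}\,\Big(\int_\M h^p+\textrm{(explicit constant)}\Big).
\]
Feeding this into the Sobolev inequality \eqref{sobolev} with $u=h^{p/2}$ and $\alpha=\tfrac m{m-1}$, iterating over $p=\alpha^{k}$ ($k=0,1,2,\dots$), summing the convergent series $\sum_k\alpha^{-k}=m$ and $\sum_k k\,\alpha^{-k}=m(m-1)$ exactly as in Lemma \ref{MVI}, and de-normalizing, one obtains $\max_\M|\d\phi|^2$ bounded by a constant of the claimed shape times $\int_\M|\phi|^2$; tracking the numerical factors through the iteration gives \eqref{gradient estimate1}, and \eqref{gradient estimate} is the special case $\|\phi\|_{L^2}=1$. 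The single extra power of $(\lambda+K)$ relative to Lemma \ref{MVI} reflects $\int_\M h\le(\lambda+mK)\int_\M|\phi|^2$.

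The crux — and the one place where anything could go wrong — is the treatment of the $\d R$ term. It is essential that the Main Lemma produces the \emph{full} Hessian norm $|\d^2\phi|^2$, not merely $<\d\Delta\phi,\d\phi>$-type quantities: this supplies exactly the room needed to absorb the second-order remainder of the integration by parts; and it is essential that the mixed second covariant derivatives of an $(m,0)$ form recombine, by the Ricci identity, into $\Delta\phi$ plus a scalar multiple of $\phi$, so that the process terminates with a bound depending only on $|\Ric|$, $\lambda$ and (through Lemma \ref{MVI}) $\|\phi\|_{L^2}$. The remaining work — the Moser iteration itself, the regularization $(h+\eps)^{p-1}$, and the bookkeeping of the lower-order power $h^{p-1}$ needed to land on the explicit constant — is routine.
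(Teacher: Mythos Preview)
Your overall plan---integrate the $\d R$ term by parts and absorb the debris via Moser iteration---is right, and your identification of the three post-integration pieces (i)--(iii) is accurate. But the route you take to close the iteration differs from the paper's in a way that costs you the stated constant.

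You iterate on $h=|\d\phi|^2$ and control the $|\phi|^2$ factors produced by (ii) and (iii) using the sup bound $|\phi|^2\le A=4^{m(m-1)}C_S^m(\lambda+mK+|\M|^{-1/m})^m$ from Lemma~\ref{MVI}. Two problems follow. First, the cost of absorbing (ii) into $\int h^{p-1}|\d^2\phi|^2$ is $\propto (p-1)^2K^2\int h^{p-1}|\phi|^2$, not $\propto pK^2$: Young's inequality against a term with coefficient $1$ squares the prefactor. Second, and more seriously, replacing $|\phi|^2$ by $A$ injects a factor of order $(\lambda+K)^m$ into the iteration coefficient; after the Moser iteration (whose exponents sum to $m$) this becomes $(\lambda+K)^{m^2}$, far worse than the single extra power $(\lambda+K)$ you claim. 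On top of this, the step $\int h^{p-1}\le\int h^p+|\M|$ makes the recursion inhomogeneous, so it does \emph{not} run ``exactly as in Lemma~\ref{MVI}''.

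The paper avoids all of this by iterating on $v=|\d\phi|^2+A|\phi|^2$ with a free constant $A>0$. Adding $A|\phi|^2$ has two effects: combined with the Bochner formula \eqref{bochm0} for $|\phi|^2$ it keeps the differential inequality for $v$ homogeneous, and it supplies the pointwise bound $|\phi\,\overline{\d_j\phi}|\le A^{-1/2}v$ (since $|\phi|^2\le A^{-1}v$ and $|\d_j\phi|^2\le v$). With this, the cross term $R\,\phi\,\overline{\d_j\phi}\,\d_j v^{2p-1}$ is controlled by $mKA^{-1/2}v^{p}|\d v^{p}|$ and absorbed by Cauchy--Schwarz into $\tfrac12\int|\d v^p|^2$ at cost $\propto p\,m^2K^2A^{-1}\int v^{2p}$; the $|\d^2\phi|^2$ term is never needed. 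Setting $A=K$ keeps the iteration coefficient of order $\lambda+K$, and one obtains
\[
\max_\M v\ \le\ C(m)\,C_S^m\big[\lambda+K+|\M|^{-1/m}\big]^m\int_\M v,
\qquad
\int_\M v\ \le\ (\lambda+mK+K)\int_\M|\phi|^2,
\]
which is exactly \eqref{gradient estimate1}. Lemma~\ref{MVI} is not used inside the argument at all.
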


\proof

Let us first deal with the case where $K>0$. Let 
\[
v=|\d\phi|^2+A|\phi|^2
\]where $A >0$ is a constant to be chosen later. Once we can bound $v$ then the bound on $|\d \phi|$ follows. The reason for the appearance of $A |\phi|^2$ term is to help dealing with the term involving
 $\nabla R$ in the Bochner formula.
 By \eqref{Delta phi^2} and Lemma \ref{Bochner}, we have
\be\label{v}
\al
\Delta v\geq &-\left[\lambda+(m+4)K\right]|\d\phi|^2+\phi_{12\cdots m}\d_jR\overline{\d_j\phi_{12\cdots m}}+\overline{\phi_{12\cdots m}}\d_j\phi_{12\cdots m}\d_{\bar{j}}R\\
& +|\d^2\phi|^2+A|\d\phi|^2-(\lambda+mK)A|\phi|^2\\
\geq& -\left[\lambda+(m+4)K\right]v+\phi_{12\cdots m}\d_jR\overline{\d_j\phi_{12\cdots m}}+\overline{\phi_{12\cdots m}}\d_j\phi_{12\cdots m}\d_{\bar{j}}R\\
& +|\d^2\phi|^2.
\eal
\ee
For $p\geq 1$, multiple both sides above by $v^{2p-1}$ and then take integrals over $\M$. Since the second and third terms on the RHS of the inequality are conjugate complex functions, we get
\be\label{estimate 2re}
\al
&\int_\M \phi_{12\cdots m}\d_jR\overline{\d_j\phi_{12\cdots m}}v^{2p-1}+\overline{\phi_{12\cdots m}}\d_j\phi_{12\cdots m}\d_{\bar{j}}Rv^{2p-1}\\
=& 2 Re\left[\int_\M \phi_{12\cdots m}\d_jR\overline{\d_j\phi_{12\cdots m}}v^{2p-1}\right]\\
= & -2Re\int_\M \left(R \d_j\phi\overline{\d_j\phi}v^{2p-1}+R\phi\overline{\d_{\bar{j}}\d_j\phi}v^{2p-1}+ R\phi\overline{\d_j\phi}\d_jv^{2p-1}\right)\\
=& -2Re\int_\M\left(R|\d_j \phi|^2v^{2p-1}+(-\frac{\lambda}{2} R+R^2)|\phi|^2v^{2p-1}+(2p-1)v^{2p-2}R\phi\overline{\d_{j}\phi}\d_jv\right)\\
\geq & -2\int_{\M}\left[\left(mK+(\frac{\lambda mK}{2} +m^2K^2)A^{-1}\right)v^{2p} + \frac{2p-1}{p}mKA^{-1/2}v^p|\d v^p|\right]\\
\geq & -\int_{\M}\left(2mK+(\lambda mK + 4pm^2K^2)A^{-1}\right)v^{2p}-\int_{\M}\frac{2p-1}{2p^2}|\d v^p|^2,
\eal
\ee
where Cauchy-Schwarz inequality has been applied to the last step above. When going from the 4th to the 5th line in the preceding paragraph, we also used the inequality
\[
| \phi \overline{\d_{j}\phi}| \le A^{-1/2} v.
\]
Integrating \eqref{v} and using the last paragraph gives
\[
\al
\int_\M \frac{2p-1}{p^2}|\d v^p|^2=&-\int_\M v^{2p-1}\Delta v\\
\leq& \int_\M \left[(\lambda+5mK)+A^{-1}mK(\lambda+4pmK)\right]v^{2p}+\int_{\M}\frac{2p-1}{2p^2}|\d v^p|^2,
\eal
\]
i.e.,
\be\label{eq2.1}
\int_{\M} |\d v^p|^2\leq \int_{\M}2p^2\left[(\lambda+5mK)+A^{-1}mK(\lambda+4pmK)\right]v^{2p}.
\ee
Since the Ricci curvature is bounded, Sobolev inequality as in \eqref{sobolev} holds. Then combining \eqref{sobolev} and \eqref{eq2.1} gives
\[
\left(\int_{\M} v^{2\alpha p}\right)^{\frac{1}{\alpha}}\leq 2p^3 C_S \left[\lambda+5mK+A^{-1}mK(\lambda+4mK)+|\M|^{-\frac{1}{m}}\right]\int_{\M} v^{2p},
\]
where $\alpha=\frac{m}{m-1}$.
Letting $p=\alpha^{k}$, $k=0, 1,2,\cdots$, and performing Moser iteration as in Lemma \ref{MVI} yield
\[
\al
&\max_{\M} v^2\\
\leq& 2^m4^{m(m-1)}C_S^m\left[\lambda+5mK+A^{-1}mK(\lambda+4mK)+|\M|^{-\frac{1}{m}}\right]^m\int_\M v^2,
\eal
\]
which infers
\[
\max_{\M} v\leq 2^m4^{m(m-1)}C_S^m\left[\lambda+5mK+A^{-1}mK(\lambda+4mK)+|\M|^{-\frac{1}{m}}\right]^m\int_{\M} v.
\]
Since
\[
\al
\int_{\M} v=&\int_{\M} (|\d \phi|^2+A|\phi|^2)\\
=& \int_{\M} (-\d_{\g i}\d_i\phi\cdot\bar{\phi}-\d_i\d_{\g i}\phi\cdot\bar{\phi}+A|\phi|^2)\\
=& \int_{\M} (-2\Delta \phi\cdot\bar{\phi}-R|\phi|^2+A|\phi|^2)\\
\leq & (\lambda+mK+A)\int_{\M}|\phi|^2,
\eal
\]
gradient estimate \eqref{gradient estimate1} follows immediately by setting $A=K$.

The case where $K=0$, i.e., $\M$ is Calabi-Yau, is much easier. One just need to set $v=|\d \phi|^2$ and apply the Moser iteration directly to \eqref{bochcy} as in the proof of Lemma \ref{MVI}.
\qed \\

As a consequence of \eqref{phi upper bound}, \eqref{gradient estimate} in the preceding two Lemmas and Cauchy-Schwarz inequality, we get

\begin{corollary}\label{gradient estimate for combination}
Let $(\M, g_{i\bar{j}})$ be a compact K\"ahler manifold of complex dimension $m\geq 2$. Assume that $|Ric|\leq K$. Let $\phi_1$, $\phi_2$, $\cdots$, $\phi_k$ be orthonormal $(m,0)$ forms satisfying $\Delta_d\phi_j=\lambda_j\phi_j$, $j=1,2,\cdots, k$. Then for any sequence of real numbers $b_j$, $j=1,2,\cdots, k$, with $\sum_{j=1}^k b_j^2\leq 1$, the $(m,0)$ form $w=\sum_{j=1}^kb_j\phi_j$, the following estimate holds
\[
|\d w|^2+(\lambda_k+K)|w|^2\leq (2m)^{m+1}16^{2m(m-1)}C_S^m(\lambda_k+K+|\M|^{-\frac{1}{m}})^{m}(\lambda_k+K).
\]
\end{corollary}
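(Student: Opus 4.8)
The plan is to estimate the single scalar function $v:=|\d w|^2+(\lambda_k+K)|w|^2$ directly, by repeating for the combination $w$ (in place of a single eigenform) the Moser iteration already carried out in Lemmas \ref{MVI} and \ref{lem gradient est}. Two inputs make this workable. First, orthonormality gives $\|w\|_{L^2}^2=\sum_j b_j^2\le 1$; and since $\Delta_d w=\sum_j b_j\lambda_j\phi_j$ lies in the span $\mathcal H_k$ of $\phi_1,\dots,\phi_k$, on which the Hodge Laplacian is bounded above by $\lambda_k$, one also has the $L^2$ bounds $\langle\Delta_d w,w\rangle_{L^2}=\sum_j b_j^2\lambda_j\le\lambda_k$ and $\|\Delta_d w\|_{L^2}^2=\sum_j b_j^2\lambda_j^2\le\lambda_k^2$. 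Second, the Bochner identity of Lemma \ref{Bochner} is a pointwise identity valid for any smooth $(m,0)$ form, in particular for $w$, and may be combined with $\Delta w=-\tfrac12\Delta_d w$.

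First I would bound $\int_\M v$. Exactly as in the last display of the proof of Lemma \ref{lem gradient est}, and with the auxiliary constant there taken to be $A=\lambda_k+K$, one has $\int_\M v=\int_\M\bigl(-2\Delta w\cdot\bar w-R|w|^2+(\lambda_k+K)|w|^2\bigr)$, so using $\langle\Delta_d w,w\rangle_{L^2}\le\lambda_k$, $R\ge -mK$ and $\|w\|_{L^2}\le 1$ gives $\int_\M v\le 2\lambda_k+(m+1)K\le C(m)(\lambda_k+K)$. Crucially this step uses only the $L^2$ normalization, so no factor of $k$ appears. Next, for $p\ge 1$ I would multiply the differential inequality for $v$ obtained from Lemma \ref{Bochner} applied to $w$ — bounding the two Ricci terms by $\pm 3K|\d w|^2$ via $|Ric|\le K$, and using $R\ge -mK$ — by $v^{2p-1}$ and integrate over $\M$, aiming for a Caccioppoli-type inequality $\int_\M|\d v^p|^2\le C(m)\,p^3\bigl(\lambda_k+K+|\M|^{-1/m}\bigr)\int_\M v^{2p}$. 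Feeding this into the Sobolev inequality \eqref{sobolev}, taking $p=\alpha^\ell$ with $\alpha=\tfrac m{m-1}$ and iterating as in Lemmas \ref{MVI} and \ref{lem gradient est} (with $\sum_{\ell\ge 0}\alpha^{-\ell}=m$, $\sum_{\ell\ge 1}\ell\,\alpha^{-\ell}=m(m-1)$) then yields $\max_\M v\le C(m)^m C_S^m\bigl(\lambda_k+K+|\M|^{-1/m}\bigr)^m\int_\M v$; combining with the bound on $\int_\M v$ and a careful bookkeeping of the numerical constants produces the asserted estimate. The borderline case $K=0$ (Calabi--Yau) is easier, using \eqref{bochcy} with $v=|\d w|^2$ as at the end of the proof of Lemma \ref{lem gradient est}.

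The main obstacle is the Caccioppoli step. Unlike for a single eigenform, the terms $\langle\Delta_d w,w\rangle$ and $2\,\mathrm{Re}\langle\d\Delta w,\d w\rangle=-\mathrm{Re}\langle\d\Delta_d w,\d w\rangle$ appearing in \eqref{bochm0} and \eqref{bochM} are \emph{not} pointwise dominated by $\lambda_k|w|^2$ and $\lambda_k|\d w|^2$: for a genuine linear combination $w$ these ratios can be arbitrarily large near the zeros of $w$, so the pointwise maximum-principle form of Lemma \ref{MVI} is unavailable. One must therefore organize all estimates at the integrated level. The term $-\mathrm{Re}\langle\d\Delta_d w,\d w\rangle$ I would integrate by parts, moving one covariant derivative; this trades it for $\int_\M v^{2p-1}|\Delta_d w|^2$ (controlled against the good term $\int_\M v^{2p-1}|\d^2 w|^2$ furnished by Lemma \ref{Bochner} through the elementary inequality $|\Delta_d w|^2=4|\nabla_i\nabla_{\bar i}w|^2\le 4m\,|\d^2 w|^2$) plus cross terms of the shape $\int_\M v^{2p-1}\langle\Delta_d w,(\d v)\,\d w\rangle$, which are absorbed by Cauchy--Schwarz exactly as the $\nabla R$ terms are absorbed in \eqref{estimate 2re}; the $\nabla R$ terms themselves are handled verbatim as there, now with $A=\lambda_k+K$ and with the eigenform relation $\d_{\bar j}\d_j\phi=-\tfrac\lambda2\phi$ replaced by $\d_{\bar j}\d_j w=-\tfrac12\Delta_d w+Rw$. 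If the $|\d^2 w|^2$-bookkeeping does not close on its own, the natural remedy is a coupled iteration for $v=|\d^2 w|^2+B|\d w|^2+A|w|^2$, exploiting the additional $L^2$ bound $\|\d^2 w\|_{L^2}^2\le C(\lambda_k+K)^2$ (which follows by integrating the Bochner formula).

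I would also record, as a sanity check and as the reason a shortcut is impossible, why the seemingly more elementary route fails: estimating $|w|^2$ and $|\d w|^2$ pointwise by $\sum_{j\le k}|\phi_j|^2$ and $\sum_{j\le k}|\d\phi_j|^2$ via Cauchy--Schwarz and then inserting \eqref{phi upper bound}, \eqref{gradient estimate} term by term yields only $k$ times the asserted bound, since each summand is estimated by the same $\lambda_k$-quantity. That loss of a factor $k$ would be fatal for the application (the eigenvalue count $\lambda_k\gtrsim k^{1/m}$), so the gain of the direct iteration — replacing the sum of $k$ crude per-eigenform bounds by a single estimate for the combination with controlled $L^2$ norm — is exactly the point of the corollary.
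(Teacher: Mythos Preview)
The paper's own ``proof'' is literally the one sentence preceding the statement: ``As a consequence of \eqref{phi upper bound}, \eqref{gradient estimate} in the preceding two Lemmas and Cauchy--Schwarz inequality, we get.'' That is exactly the naive route you reject in your final paragraph --- bound $|w|^2\le\sum_j|\phi_j|^2$ and $|\d w|^2\le\sum_j|\d\phi_j|^2$ by Cauchy--Schwarz and then insert the single-eigenform bounds term by term --- and your observation that this produces an extra factor of $k$ is correct. Since $\sup_{|b|\le 1}\bigl(|\d w|^2+(\lambda_k+K)|w|^2\bigr)$ at a point is the operator norm of the Gram matrix $M_{ij}=\langle\d\phi_i,\d\phi_j\rangle+(\lambda_k+K)\langle\phi_i,\phi_j\rangle$, and Cauchy--Schwarz only gives the trace bound, the loss is genuine. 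So your proposal is not a variant of the paper's argument but an attempt to fill what appears to be a gap in it; this is worth flagging.

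Your repair strategy --- rerun the Moser iteration of Lemma~\ref{lem gradient est} directly for $v=|\d w|^2+(\lambda_k+K)|w|^2$, handling the non-eigenform terms $\langle\d\Delta w,\d w\rangle$ and $\langle\Delta_d w,w\rangle$ by integration by parts --- is natural, but the execution is more delicate than you indicate. After integrating $\int_\M v^{2p-1}\langle\d\Delta w,\d w\rangle$ by parts you pick up a cross term of the form $(2p-1)\int v^{2p-2}(\Delta w)\,\overline{\d w}\cdot\d v$. Unlike the analogous $\d R$ cross term in \eqref{estimate 2re}, where one uses $|\phi\,\overline{\d_j\phi}|\le A^{-1/2}v$ to reduce to $\int v^{2p-1}|\d v|$, here the factor $|\Delta w|$ is \emph{not} bounded by any power of $v$. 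If you try to absorb it via $|\Delta w|^2\le m|\d^2 w|^2$ against the good Hessian term, a Cauchy--Schwarz splitting forces either an $O(p)$ coefficient on $\int v^{2p-1}|\d^2 w|^2$ (too large to absorb into the coefficient-one Hessian term) or an $O(1)$ coefficient on $\int|\d v^p|^2$ (too large to absorb into the $O(p^{-1})$ left-hand side). So the iteration, as you describe it, does not close.

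Your fallback --- iterate on $|\d^2 w|^2+B|\d w|^2+A|w|^2$ --- is the right instinct, but be aware that the Bochner formula for $|\d^2 w|^2$ will in general bring in the full curvature tensor, not just Ricci, which would undercut the whole point of the paper. A safer route is to keep $|\Delta w|^2$ (rather than $|\d^2 w|^2$) as the extra term in $v$: since $\Delta w=-\tfrac12\Delta_d w=-\tfrac12\sum_j b_j\lambda_j\phi_j$ is itself an $(m,0)$ form in the span of the $\phi_j$, the identity \eqref{bochm0} applies to it and involves only the scalar curvature, and one has the $L^2$ control $\|\Delta w\|_{L^2}^2\le\tfrac14\lambda_k^2$ needed to start the iteration.
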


\medskip
Using an argument as in the proof of Theorem 2.2 (2) in \cite{WZ}, we have
\begin{corollary}
Under the assumptions in Corollary \ref{gradient estimate for combination}, we have
\be\label{sum ineq lambda1}
\lambda_1+\lambda_2+\cdots+\lambda_k\leq 2(2m)^{m+1}16^{2m(m-1)}C_S^m\left[\lambda_k+K+|\M|^{-\frac{1}{m}}\right]^{m+1}|\M|.
\ee
\end{corollary}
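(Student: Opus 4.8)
The plan is to integrate a single pointwise consequence of Corollary~\ref{gradient estimate for combination}, following the template of the proof of Theorem~2.2(2) in \cite{WZ}. We may assume $\lambda_k>0$, since otherwise $\lambda_1=\cdots=\lambda_k=0$ and the claim is trivial. Set $\Lambda':=(2m)^{m+1}16^{2m(m-1)}C_S^m(\lambda_k+K+|\M|^{-\frac1m})^m(\lambda_k+K)$, so that Corollary~\ref{gradient estimate for combination} gives, for every $x\in\M$ and every $b=(b_1,\dots,b_k)\in\IR^k$ with $\sum_j b_j^2\le1$, the inequality $|\d w_b|^2(x)+(\lambda_k+K)|w_b|^2(x)\le\Lambda'$, hence in particular $|w_b|^2(x)\le\Lambda'/(\lambda_k+K)$, where $w_b=\sum_j b_j\phi_j$. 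First I would fix $x$ and view $b\mapsto|w_b|^2(x)$ as a quadratic form on $\IR^k$. Since $K_\M$ is a line bundle, after choosing a local unit section $e$ we may write $\phi_j(x)=c_j(x)\,e(x)$ with $c_j(x)\in\mathbb{C}$, so $|w_b|^2(x)=\big|\sum_j b_j c_j(x)\big|^2$ is represented by the real symmetric matrix $M(x)=a(x)a(x)^{T}+\beta(x)\beta(x)^{T}$, where $c_j=a_j+\sqrt{-1}\,\beta_j$; in particular $M(x)$ has rank at most $2$, while the bound above says $M(x)\preceq\big(\Lambda'/(\lambda_k+K)\big)I$.

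The second step is to take the trace. Since a positive semidefinite matrix of rank at most $2$ which is $\preceq cI$ has trace at most $2c$, and since $\operatorname{tr}M(x)=\sum_j|c_j(x)|^2=\sum_j|\phi_j(x)|^2$, we obtain the pointwise bound $\sum_{j=1}^k|\phi_j(x)|^2\le 2\Lambda'/(\lambda_k+K)$. Using that $\lambda_j\le\lambda_k$ for $j\le k$ and that $K\ge0$, this gives $\sum_{j=1}^k\lambda_j|\phi_j(x)|^2\le\lambda_k\sum_{j=1}^k|\phi_j(x)|^2\le 2\lambda_k\Lambda'/(\lambda_k+K)\le 2\Lambda'$. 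Finally I would integrate over $\M$ and use $\int_\M|\phi_j|^2=1$ to conclude
\[
\sum_{j=1}^k\lambda_j=\int_\M\sum_{j=1}^k\lambda_j|\phi_j|^2\le 2\Lambda'|\M|\le 2(2m)^{m+1}16^{2m(m-1)}C_S^m\Big(\lambda_k+K+|\M|^{-\tfrac1m}\Big)^{m+1}|\M|,
\]
where the last step merely bounds the factor $\lambda_k+K$ in $\Lambda'$ by the larger $\lambda_k+K+|\M|^{-1/m}$. This is exactly the assertion.

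There is no serious analytic obstacle left: everything substantive is already in Lemma~\ref{MVI}, Lemma~\ref{lem gradient est} and Corollary~\ref{gradient estimate for combination}, and what remains is a trace inequality together with one integration. The only point that genuinely matters — and the reason the bound carries no explicit factor of $k$ — is the use of the line-bundle structure in the first step: it forces $b\mapsto|w_b|^2(x)$ to have rank at most $2$, so that $\sum_j|\phi_j(x)|^2$ is controlled by a single value of the sup-estimate rather than by $k$ times it. Had one instead routed the argument through $\sum_j|\d\phi_j|^2(x)$ together with the identity $\lambda_j=\int_\M|\d\phi_j|^2+\int_\M R|\phi_j|^2$, one would pick up the rank of $T^*\M\otimes K_\M$, a dimensional constant depending on $m$; the route above sidesteps that. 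The one remaining bookkeeping subtlety is the real-versus-complex-coefficient distinction, which is precisely what produces the constant $2$.
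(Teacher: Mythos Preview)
Your proof is correct and takes a genuinely different route from the paper's own argument.

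The paper argues through the \emph{gradient} term in Corollary~\ref{gradient estimate for combination}. At each fixed $x$, the $k$ vectors $(\d_{\bar 1}\phi_j,\ldots,\d_{\bar m}\phi_j)\in\mathbb{C}^m$ have complex rank at most $m$, so a unitary change of basis $\psi_i=\sum_j a_{ij}\phi_j$ kills $\d^{0,1}\psi_i(x)$ for $i>m$; unitary invariance gives $\sum_{j=1}^k\sum_l|\d_{\bar l}\phi_j|^2(x)=\sum_{i\le m}\sum_l|\d_{\bar l}\psi_i|^2(x)$, which is then bounded via the sup-estimate and integrated using $\lambda_j=2\int_\M\sum_l|\d_{\bar l}\phi_j|^2$.

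You instead use the $|w|^2$ term, exploiting directly that $K_\M$ is a \emph{line} bundle: the real quadratic form $b\mapsto|w_b|^2(x)$ has rank $\le 2$, so its trace $\sum_j|\phi_j|^2(x)$ is controlled by twice the sup, and you close with $\lambda_j\le\lambda_k$ and $\int|\phi_j|^2=1$. This is a bit cleaner for two reasons: it uses Corollary~\ref{gradient estimate for combination} exactly as stated (real coefficients, whereas the paper tacitly applies it with complex unitary coefficients), and it avoids the integration-by-parts identity for $\lambda_j$. Your own closing remark is on the mark: the paper's route is precisely the ``gradient'' alternative you describe, and its rank bound is the complex rank $m$ of the cotangent space rather than your rank $2$; in fact, tracked carefully, the paper's computation carries an extra factor of $m$ that yours does not.
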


\begin{proof}  Since the complex situation is somewhat different from the real case, we give a detailed proof.
For each $x\in\M$, since the complex rank of the $k$ vectors at $x$,  $(\d_{\g 1}\phi_1,\cdots,\d_{\g m}\phi_1)$, $(\d_{\g 1}\phi_2,\cdots,\d_{\g m}\phi_2),\cdots$, $(\d_{\g 1}\phi_k,\cdots,\d_{\g m}\phi_k)$ is no more than $m$, it is possible to find a unitary matrix $\left(a_{ij}\right)_{k\times k}$ such that $\psi_i=\sum_{j=1}^ka_{ij}\phi_j$, $1\leq i \leq k$ satisfy that $$\d_{\g l}\psi_i(x)=0,\ 1\leq l \leq m, \ m+1\leq i\leq k.$$ Here, for simplicity, we used $\d_{\g j}\phi_i$ to denote the coefficient $\d_{\g j}(\phi_i)_{1,2,\cdots,m}$ of $\d^{0,1}\phi_i$.

Then we derive from Corollary \ref{gradient estimate for combination} that
\[
\al
\sum_{j=1}^k\sum_{l=1}^m|\d_{\g l}\phi_j|^2(x)= & \sum_{i=1}^m\sum_{l=1}^m|\d_{\g l} \psi_i|^2(x)\\
\leq & m\max_i|\d \psi_i|^2(x)\\
\leq & (2m)^{m+1}16^{2m(m-1)}C_S^m(\lambda_k+K+|\M|^{-\frac{1}{m}})^{m}(\lambda_k+K).
\eal
\]
Thus integrating both sides gives \eqref{sum ineq lambda1} since
\[
2 \int_\M \sum_{l=1}^m|\d_{\g l}\phi_j|^2(x) =  \lambda_j,
\] which follows from the fact that $\Delta_d=-2\Delta$.
\end{proof}

As indicated in \cite{WZ}, it can be shown that \eqref{sum ineq lambda1} induces a lower bound of $\lambda_k$. Explicitly, first by induction one gets that

\begin{lemma}\label{algebraic lemma}
For $0\leq \lambda_1\leq \lambda_2\leq \cdots\leq \lambda_k\leq \cdots$, if the inequality
\be\label{sum ineq lambda}
\lambda_1+\lambda_2+\cdots+\lambda_k\leq c_0\lambda_k^{m+1}
\ee
holds for any $k\geq 1$, then one has
\be\label{LB lambda}
\lambda_k\geq c_1k^{\frac{1}{m}},
\ee
where $c_1=\min\{\lambda_1, \left(\frac{m}{c_0(m+1)}\right)^{\frac{1}{m}}\}$, and $m\geq 1$ is an integer.
\end{lemma}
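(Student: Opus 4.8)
The plan is to prove \eqref{LB lambda} by induction on $k$, the crucial observation being that the hypothesis \eqref{sum ineq lambda} taken at $k=1$ already gives a lower bound for $\lambda_1$, hence for every $\lambda_k$. We may assume $c_0>0$ and $c_1>0$ (otherwise every $\lambda_j=0$ and the claim is trivial); since $c_1\le\lambda_1$ this gives $\lambda_1>0$, and \eqref{sum ineq lambda} with $k=1$ reads $\lambda_1\le c_0\lambda_1^{m+1}$, so $c_0\lambda_1^m\ge1$ and $\lambda_1\ge c_0^{-1/m}$. The base case $k=1$ is then just $\lambda_1\ge c_1$.

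For the inductive step, fix $k\ge2$, assume $\lambda_j\ge c_1 j^{1/m}$ for $1\le j\le k-1$, and suppose toward a contradiction that $\lambda_k<c_1 k^{1/m}$. Keeping the last term of \eqref{sum ineq lambda} and inserting the inductive hypothesis into the rest gives
\[
c_0\lambda_k^{m+1}-\lambda_k\ \ge\ \sum_{j=1}^{k-1}\lambda_j\ \ge\ c_1\sum_{j=1}^{k-1}j^{1/m}.
\]
Next I would use that $t\mapsto c_0t^{m+1}-t$ is non-decreasing for $t\ge\bigl(c_0(m+1)\bigr)^{-1/m}$, together with $\lambda_k\ge\lambda_1\ge c_0^{-1/m}\ge\bigl(c_0(m+1)\bigr)^{-1/m}$, to turn the contradiction assumption $\lambda_k<c_1k^{1/m}$ into
\[
c_0\lambda_k^{m+1}-\lambda_k\ \le\ c_0c_1^{m+1}k^{(m+1)/m}-c_1k^{1/m}.
\]
Combining the two displays, dividing by $c_1>0$, and using $c_0c_1^m\le\frac{m}{m+1}$ (immediate from $c_1\le(\frac{m}{c_0(m+1)})^{1/m}$), one arrives at
\[
\sum_{j=1}^{k}j^{1/m}\ \le\ \frac{m}{m+1}\,k^{(m+1)/m}=\int_0^k x^{1/m}\,dx,
\]
which contradicts the strict inequality $\sum_{j=1}^{k}j^{1/m}>\int_0^k x^{1/m}\,dx$ (each summand $j^{1/m}$ strictly exceeds $\int_{j-1}^{j}x^{1/m}\,dx$ because $x^{1/m}$ is strictly increasing). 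Hence $\lambda_k\ge c_1k^{1/m}$ and the induction closes.

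I expect the only real obstacle to be the ``off-by-one'' discrepancy: a naive recursion, e.g. bounding $\lambda_k$ from below using only $\lambda_1,\dots,\lambda_{k-1}$ and then applying \eqref{sum ineq lambda}, produces only $\lambda_k\ge c'\,(k-1)^{1/m}$ and loses a dimensional factor relative to the asserted $c_1k^{1/m}$. Resolving this is exactly why I retain the $\lambda_k$-term on the right of \eqref{sum ineq lambda}, why I invoke the monotonicity of $c_0t^{m+1}-t$ beyond its critical point (legitimate only because \eqref{sum ineq lambda} at $k=1$ forces $\lambda_1\ge c_0^{-1/m}$), and why the final step must use the \emph{strict} sum-versus-integral comparison rather than a lossy one. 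The remaining manipulations are elementary.
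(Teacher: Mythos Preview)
Your proof is correct and follows essentially the same strategy as the paper: induction on $k$, contradiction at the inductive step, and reduction to the numerical inequality $\sum_{j=1}^{k} j^{1/m}\ge \frac{m}{m+1}k^{(m+1)/m}$. The execution differs in two minor ways: the paper simply divides \eqref{sum ineq lambda} by $\lambda_k$ (avoiding your monotonicity argument for $c_0t^{m+1}-t$ and the auxiliary bound $\lambda_1\ge c_0^{-1/m}$), and it verifies the numerical inequality by a separate induction rather than by your cleaner Riemann-sum comparison with $\int_0^k x^{1/m}\,dx$.
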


\proof The conclusion follows from induction on $k$. Firstly, it is obvious that \eqref{LB lambda} holds for $k=1$.

Assume that \eqref{LB lambda} holds for all $1\leq i<k$. We show that \eqref{LB lambda} also holds for $k$. There will be two cases.  \\
Case 1: If $\lambda_k=0$, then $\lambda_1=0$. Hence $c_1=0$, and \eqref{LB lambda} is trivial for $k$.\\
Case 2: If $\lambda_k>0$, then we may argue by contradiction. Suppose that
\be\label{contra lambda}
\lambda_k < c_1k^{\frac{1}{m}}.
\ee
Dividing both sides of \eqref{sum ineq lambda} by $\lambda_k$, and using the induction hypothesis for $\lambda_1, \cdots, \lambda_{k-1}$, we get
\[
\al
\left(\frac{1}{k}\right)^{\frac{1}{m}}+\left(\frac{2}{k}\right)^{\frac{1}{m}}+\cdots +\left(\frac{k-1}{k}\right)^{\frac{1}{m}}+\left(\frac{k}{k}\right)^{\frac{1}{m}}\leq \frac{\lambda_1}{\lambda_k}+\frac{\lambda_2}{\lambda_k}+\cdots +\frac{\lambda_{k-1}}{\lambda_k}+1\leq c_0\lambda_k^{m}.
\eal
\]
Plugging in \eqref{contra lambda} yields
\be\label{eq3.1}
\left(\frac{1}{k}\right)^{\frac{1}{m}}+\left(\frac{2}{k}\right)^{\frac{1}{m}}+\cdots +\left(\frac{k-1}{k}\right)^{\frac{1}{m}}+\left(\frac{k}{k}\right)^{\frac{1}{m}}< \frac{m}{m+1}k.
\ee
However, by induction again, it is straightforward to check that
\be\label{eq3.2}
\left(\frac{1}{k}\right)^{\frac{1}{m}}+\left(\frac{2}{k}\right)^{\frac{1}{m}}+\cdots +\left(\frac{k-1}{k}\right)^{\frac{1}{m}}+\left(\frac{k}{k}\right)^{\frac{1}{m}}\geq \frac{m}{m+1}k,
\ee
for any $k\geq 1$, which contradicts with \eqref{eq3.1}. In fact, \eqref{eq3.2} obviously holds for $k=1$. If furthermore, it holds for $k-1$, i.e.,
\[
\left(\frac{1}{k-1}\right)^{\frac{1}{m}}+\left(\frac{2}{k-1}\right)^{\frac{1}{m}}+\cdots +\left(\frac{k-2}{k-1}\right)^{\frac{1}{m}}+\left(\frac{k-1}{k-1}\right)^{\frac{1}{m}}\geq \frac{m}{m+1}(k-1),
\]
then
\[
\al
\left(\frac{1}{k}\right)^{\frac{1}{m}}+\left(\frac{2}{k}\right)^{\frac{1}{m}}+\cdots +\left(\frac{k-1}{k}\right)^{\frac{1}{m}}+\left(\frac{k}{k}\right)^{\frac{1}{m}}\geq \frac{m}{m+1}(k-1)\left(\frac{k-1}{k}\right)^{\frac{1}{m}}+1.
\eal
\]
By direct computation, one can see that function $f(x)=\frac{m}{m+1}(1-x)^{\frac{1}{m}+1}+x-\frac{m}{m+1}$ has a global minimum at $x=0$. It implies that
\[
f(\frac{1}{k})\geq f(0)=0,
\]
which is equivalent to
\[
\frac{m}{m+1}(k-1)\left(\frac{k-1}{k}\right)^{\frac{1}{m}}+1\geq \frac{m}{m+1}k.
\]
Therefore, \eqref{eq3.2} also holds for $k$. This finishes the proof of the lemma.\qed\\

An immediate consequence of \eqref{sum ineq lambda1} and Lemma \ref{algebraic lemma} is that

\begin{corollary}\label{lambda k}
Under the assumptions in Corollary \ref{gradient estimate for combination}, if $\lambda_{k_0}\geq c_2>0$ for some $k_0$, then one has
\be\label{rough eigenvalue lower bound}
\lambda_k\geq c_3(k-k_0+1)^{\frac{1}{m}}, \forall\ k\geq k_0,
\ee
where $c_3=\min\{c_2, \left(\frac{m}{\Lambda(m+1)}\right)^{\frac{1}{m}}\}$, and $\Lambda=2(2m)^{m+1}16^{2m(m-1)}C_S^m|\M|\left(1+\frac{K+|\M|^{-\frac{1}{m}}}{c_2}\right)^{m+1}$.
\end{corollary}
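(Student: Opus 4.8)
The plan is to deduce this from the algebraic Lemma \ref{algebraic lemma} by a shift of indices, so that the role of "$\lambda_1$" in that lemma is played by $\lambda_{k_0}$. First I would introduce the shifted sequence $\mu_\ell := \lambda_{k_0+\ell-1}$ for $\ell\geq 1$. It inherits the monotonicity $\mu_\ell\leq\mu_{\ell+1}$ from $\{\lambda_k\}$, and $\mu_1=\lambda_{k_0}\geq c_2>0$. The goal is to verify, for every $\ell\geq 1$, an inequality of the exact shape required by Lemma \ref{algebraic lemma}, namely
\[
\mu_1+\mu_2+\cdots+\mu_\ell\leq \Lambda\,\mu_\ell^{m+1}.
\]

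To obtain this, I would apply \eqref{sum ineq lambda1} with $k=k_0+\ell-1$ and simply discard the nonnegative terms $\lambda_1,\dots,\lambda_{k_0-1}$ on the left-hand side, which gives
\[
\mu_1+\cdots+\mu_\ell\leq \lambda_1+\cdots+\lambda_{k_0+\ell-1}\leq 2(2m)^{m+1}16^{2m(m-1)}C_S^m|\M|\left[\mu_\ell+K+|\M|^{-\frac{1}{m}}\right]^{m+1}.
\]
Then, using $\mu_\ell\geq\mu_1\geq c_2$, I would estimate
\[
\mu_\ell+K+|\M|^{-\frac{1}{m}}=\mu_\ell\left(1+\frac{K+|\M|^{-\frac{1}{m}}}{\mu_\ell}\right)\leq \mu_\ell\left(1+\frac{K+|\M|^{-\frac{1}{m}}}{c_2}\right),
\]
and substituting this into the previous display produces precisely $\mu_1+\cdots+\mu_\ell\leq\Lambda\,\mu_\ell^{m+1}$ with $\Lambda$ as defined in the statement.

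Finally, Lemma \ref{algebraic lemma} applied to the sequence $\{\mu_\ell\}_{\ell\geq1}$ with $c_0=\Lambda$ gives $\mu_\ell\geq\min\{\mu_1,\left(\frac{m}{\Lambda(m+1)}\right)^{\frac{1}{m}}\}\,\ell^{\frac{1}{m}}$; since $\mu_1=\lambda_{k_0}\geq c_2$, the coefficient is at least $c_3=\min\{c_2,\left(\frac{m}{\Lambda(m+1)}\right)^{\frac{1}{m}}\}$. Translating back through $\lambda_k=\mu_{k-k_0+1}$ yields \eqref{rough eigenvalue lower bound} for all $k\geq k_0$. I do not expect a genuine obstacle here; the only point deserving care is that Lemma \ref{algebraic lemma} requires the summation inequality at \emph{every} index of the shifted sequence, which is exactly why one throws away $\lambda_1,\dots,\lambda_{k_0-1}$ rather than attempting to keep track of them, and that the monotonicity hypothesis of the lemma is satisfied by $\{\mu_\ell\}$.
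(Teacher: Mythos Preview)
Your proposal is correct and is precisely the intended argument: the paper states only that the corollary is an immediate consequence of \eqref{sum ineq lambda1} and Lemma \ref{algebraic lemma}, and your index shift $\mu_\ell=\lambda_{k_0+\ell-1}$, together with discarding $\lambda_1,\dots,\lambda_{k_0-1}$ and the factorization $\mu_\ell+K+|\M|^{-1/m}\leq \mu_\ell\bigl(1+(K+|\M|^{-1/m})/c_2\bigr)$, is exactly how one makes that immediacy explicit. There is no difference in approach, only in the level of detail supplied.
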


On the other hand,  the following lower bound for the first eigenvalue $\lambda_1$ holds.

\begin{lemma}\label{lambda1 lower bound R>0} Let $\M$ be a compact K\"ahler manifold  of complex dimension $m$.
If the scalar curvature $R$ is non-negative and positive somewhere, then

\[
\lambda_1\geq \min\{\frac{1}{2} \lambda^0_1 + \inf_\M R, \quad \frac{\lambda^0_1}{2(\lambda^0_1 + \sup R)} \frac{1}{ |\M|} \int_\M R \}
\]where $\lambda^0_1$ is the first nontrivial eigenvalue of the scalar Laplacian.
\end{lemma}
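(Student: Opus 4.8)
The plan is: pass from the $(m,0)$-form $\phi$ to the scalar function $|\phi|$ via the $(m,0)$-Bochner formula and Kato's inequality, bring in $\lambda^0_1$ through the scalar Poincar\'e inequality, and then run a dichotomy on how concentrated $|\phi|$ is, the two regimes producing the two terms of the minimum.

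\textbf{Step 1 (from the form to its length).} Let $\phi$ be an eigenform for $\lambda_1$, normalized by $\int_\M|\phi|^2=1$, and write $u=|\phi|$. Integrating the Bochner identity (\ref{Delta phi^2}) (which comes from (\ref{bochm0}) and $\Delta=-\tfrac12\Delta_d$ on $(m,0)$-forms) over $\M$ gives
\[
\int_\M|\d\phi|^2+\int_\M R\,|\phi|^2=\lambda_1 .
\]
Kato's inequality $|\d u|^2\le|\d\phi|^2$ and the fact that the complex and real gradient norms of a function coincide then yield, with the scalar Poincar\'e inequality $\int_\M|\d u|^2\ge\lambda^0_1\int_\M(u-\bar u)^2$ where $\bar u=\tfrac1{|\M|}\int_\M u$,
\[
\lambda_1\ \ge\ \lambda^0_1\,\sigma+\int_\M R\,u^2,\qquad \sigma:=\int_\M(u-\bar u)^2\in[0,1),
\]
and in particular $\lambda_1\ge\lambda^0_1\sigma$ and $\lambda_1\ge\int_\M Ru^2\ge\inf_\M R$ (using $\int_\M u^2=1$). (Along the way one sees $\lambda_1>0$: a harmonic $(m,0)$-form is holomorphic, hence parallel by the displayed identity with $\lambda_1=0$, hence $R|\phi|^2\equiv0$, forcing $\phi\equiv0$; equivalently the final lower bound is already strictly positive.)

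\textbf{Step 2 (dichotomy).} If $\sigma\ge\tfrac12$, then $\lambda_1\ge\lambda^0_1\sigma+\inf_\M R\ge\tfrac12\lambda^0_1+\inf_\M R$, which is the first term. If $\sigma<\tfrac12$, then $u$ is close to the constant $\bar u$ (note $\bar u^2|\M|=1-\sigma$), and one bounds $\int_\M Ru^2$ from below by expanding about $\bar u$: since $R\ge0$,
\[
\int_\M Ru^2=\bar u^2\!\int_\M R+2\bar u\!\int_\M (R-\bar R)(u-\bar u)+\int_\M R(u-\bar u)^2\ \ge\ (1-\sigma)\bar R-2\sqrt{(1-\sigma)\,\bar R\sup_\M R\,\sigma},
\]
where $\bar R=\tfrac1{|\M|}\int_\M R$ and we used Cauchy--Schwarz together with the elementary variance bound $\int_\M(R-\bar R)^2\le\bar R\sup_\M R\,|\M|$ (valid because $R\ge0$). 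Feeding this into $\lambda_1\ge\int_\M Ru^2$ and combining with $\lambda_1\ge\lambda^0_1\sigma$, a short optimization over $\sigma\in[0,\tfrac12)$ — choosing a threshold of order $\bar R/(\lambda^0_1+\sup R)$, above which the Poincar\'e bound already gives the second term, and below which the displayed estimate for $\int_\M Ru^2$ does — produces $\lambda_1\ge\dfrac{\lambda^0_1}{2(\lambda^0_1+\sup_\M R)}\,\bar R$. Taking the minimum of the two cases finishes the proof.

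\textbf{Where the difficulty lies.} The delicate point is the last optimization in Step 2: the cross-term in the expansion of $\int_\M Ru^2$ is of size $\sqrt{\sigma}$, which dominates the $O(\sigma)$ gain from Poincar\'e when $\sigma$ is small, so one cannot simply add the two lower bounds. The split between the two regimes, and the use of $\bar R\le\sup_\M R$, must be arranged carefully to land exactly on the constant $\tfrac{\lambda^0_1}{2(\lambda^0_1+\sup R)}$ rather than a weaker one; everything else (the Bochner computation, Kato's inequality, and the sharp scalar Poincar\'e inequality) is standard.
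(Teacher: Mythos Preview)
Your Step~1 and the dichotomy threshold $\sigma=\tfrac12$ in Step~2 match the paper exactly, and Case~1 is identical. The gap is in Case~2: the ``short optimization'' you defer cannot produce the constant $\tfrac{\lambda^0_1}{2(\lambda^0_1+\sup R)}$, and in fact your displayed lower bound for $\int_\M Ru^2$ combined with $\lambda^0_1\sigma$ can be \emph{negative}. Concretely, with $\lambda^0_1=1$, $\bar R=1$, $\sup R=100$ (consistent with $R\ge0$), your bound reads $g(\sigma)=1-20\sqrt{(1-\sigma)\sigma}$, which is already $\approx -0.99$ at $\sigma=0.01$. Since $\sigma$ is dictated by the eigenform, not chosen by you, a threshold argument inside $[0,\tfrac12)$ does not rescue this: on the sub-threshold side your bound is still governed by the $\sqrt{\sigma}$ loss from the unweighted Cauchy--Schwarz plus the crude variance estimate $\int_\M(R-\bar R)^2\le \bar R\sup R\,|\M|$, and this loss is not controlled by $\lambda^0_1\sigma$.

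The paper avoids this by \emph{not} separating the Poincar\'e term from the potential term. Writing $f=|\phi|$, $a=\bar f$, it keeps
\[
\lambda_1\ \ge\ \int_\M(\lambda^0_1+R)(f-a)^2\;+\;2a\int_\M R(f-a)\;+\;a^2\int_\M R,
\]
and then applies a \emph{weighted} Cauchy--Schwarz to the cross term, with weight $\sqrt{\lambda^0_1+R}$:
\[
2a\int_\M R(f-a)=2a\int_\M\frac{R}{\sqrt{\lambda^0_1+R}}\cdot\sqrt{\lambda^0_1+R}\,(f-a)\ \ge\ -a^2\int_\M\frac{R^2}{\lambda^0_1+R}-\int_\M(\lambda^0_1+R)(f-a)^2.
\]
The quadratic-in-$(f-a)$ term cancels \emph{exactly}, leaving $\lambda_1\ge a^2\int_\M\frac{\lambda^0_1 R}{\lambda^0_1+R}\ge a^2\frac{\lambda^0_1}{\lambda^0_1+\sup R}\int_\M R$, and then $a^2|\M|=1-\sigma>\tfrac12$ gives the stated constant on the nose. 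So the missing idea is precisely this weighted Cauchy--Schwarz that matches the weight to the combined coefficient $(\lambda^0_1+R)$; once you insert it, no optimization is needed.
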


\proof

Assume that $\phi$ is an eigenform of $\lambda_1$ with $\int_\M |\phi|^2=1$. Recall from  \eqref{Delta phi^2}
 that
\be
\lab{bochm00}
\Delta|\phi|^2=-\frac{1}{2}\Delta_d|\phi|^2=|\d\phi|^2-\lambda_1 |\phi|^2+R|\phi|^2.
\ee Integrating the above identity and using Kato's inequality, we find that
\be
\lab{lam11p}
\al
\lambda_1 &=\int_\M |\d\phi|^2 + \int_\M R|\phi|^2 \ge \int_\M |\d |\phi| |^2 + \int_\M R|\phi|^2\\
&\ge \lambda_{1, R},
\eal
\ee where $\lambda_{1, R}$ is the first eigenvalue of the scalar Schr\"odinger operator $-\Delta^{\mathbb{R}} +R$ with $\Delta^{\mathbb{R}} $ being the real Laplacian.

If $|\phi|$ is a constant function, then \eqref{lam11p} also gives us
\be
\lab{lam1phiconst}
\lambda_1
\ge \frac{1}{|\M|} \int_\M R.
\ee

 Next we assume that $|\phi|$ is not a constant.  For simplicity, we write $f=|\phi(x)|$ and $a = \frac{1}{|\M|} \int_\M f$, the average of $|\phi|$ over $\M$. We consider two cases.\\

\noindent {\it Case 1.} Suppose
\be
\lab{f-a>12}
\int_\M (f-a)^2 \ge 1/2.
\ee

Then the first line of \eqref{lam11p} implies
\[
\al
\lambda_1 & \ge \int_\M |\d f |^2 + \int_\M R f^2\\
&\ge \lambda^0_1 \int_{\M} (f-a)^2 + \inf_\M R.
\eal
\]
Therefore
\be
\lab{lam1>lam01/2}
\lambda_1 \ge  \frac{1}{2}\lambda^0_1 + \inf_\M R.
\ee

\noindent {\it Case 2.} Suppose
\be
\lab{f-a<12}
\int_\M (f-a)^2 < 1/2.
\ee

Then we can expand the square to reach:
\[
\int_{\M} f^2 - 2 a \int_{\M} f + a^2 |\M| <1/2,
\]which shows, since $\int_\M f^2 =1$,  that
\be
\lab{a2>}
a^2 > \frac{1}{2|\M|}.
\ee

From  the first line of \eqref{lam11p} again,

\[
\al
\lambda_1 & \ge \int_\M |\d f |^2 + \int_\M R (f-a +a)^2\\
&\ge  \int_\M \left(  \lambda^0_1 + R \right)  (f-a)^2 + 2 a \int_\M R (f-a) + a^2 \int_\M R \\
&=\int_\M \left(  \lambda^0_1 + R \right)  (f-a)^2 + 2 a \int_\M \frac{R}{\sqrt{ \lambda^0_1 +R}} \sqrt{ \lambda^0_1 +R} (f-a) + a^2 \int_\M R\\
&\ge - a^2 \int_\M \frac{  R^2}{\lambda^0_1 +  R}  + a^2 \int_\M R\\
&=a^2 \int_\M \frac{\lambda^0_1 R}{\lambda^0_1 +R}.
\eal
\] Here we just used Cauchy-Schwarz inequality in the 3rd last line.
From \eqref{a2>}, this shows
\be
\lambda_1 \ge a^2 \frac{\lambda^0_1}{\lambda^0_1 + \sup R} \int_\M R >  \frac{\lambda^0_1}{2(\lambda^0_1 + \sup R)} \frac{1}{ |\M|} \int_\M R.
\ee  Combining this with \eqref{lam1phiconst} and \eqref{lam1>lam01/2}, we find that
\[
\lambda_1 \ge \min\{\frac{1}{2} \lambda^0_1 + \inf_\M R, \quad \frac{\lambda^0_1}{2(\lambda^0_1 + \sup R)} \frac{1}{ |\M|} \int_\M R\}.
\]
This completes the proof of the lemma.


\qed

\begin{remark}
It is not hard to generalize from the above lemma that Hodge numbers $h^{m,0}=h^{0,m}=0$ whenever the total scalar curvature is positive and the negative part of the scalar curvature is sufficiently small in $L^\infty$ norm. Indeed in this case $\lambda_1>0$ and  hence there is no nonzero harmonic $(m,0)$ forms. The same conclusion for $R>0$ was first obtained by Kobayashi-Wu \cite{KW}.
\end{remark}

Now we are ready to give\\

\noindent{\it Completion of proof of Theorem \ref{thmain}.}

Theorem \ref{thmain} is a direct consequence of Corollary \ref{lambda k} with $k_0=1$ and Lemma \ref{lambda1 lower bound R>0}.\\  \qed\\

Another consequence of Lemma \ref{lambda1 lower bound R>0} is a more explicit lower bound of $\lambda_1$ for Fano manifolds. If $\M$ is a Fano manifold with
$\textbf{c}_1=\a[\omega]$ for some positive real number $\a$. Here $\textbf{c}_1$ is the first Chern class and $\omega$ is the K\"ahler form. By the $\partial\bar{\partial}$ lemma, there is a real smooth function $L$ such that $R_{i\bar{j}}=\pi\a g_{i\bar{j}}+\partial_i\partial_{\g j}L$. Thus, we get $R=m\pi\a+\Delta L$, and
\be\label{avg R}
\frac{1}{|\M|}\int_{\M}R=m\pi\a.
\ee
On the other hand, the relation between $\textbf{c}_1$ and $[\omega]$ also implies that
\[
\a^m|\M|=\a^m\int_{\M}[\omega]^m=\int_{\M}\textbf{c}_1^m.
\]
Since $\textbf{c}_1>0$, the RHS is a positive integer. Hence, it induces that
\[
\a^m|\M|\geq 1,
\]
which, from \eqref{avg R}, is equivalent to
\be\label{avg R1}
\frac{1}{|\M|}\int_{\M}R\geq m\pi|\M|^{-\frac{1}{m}}.
\ee
Therefore, substituting the bound above in Lemma \ref{lambda1 lower bound R>0}, we get

\begin{corollary}
Let $\M$ be a Fano manifold of complex dimension $m$. Suppose that $\mathbf{c}_1=\alpha[w]$ for some $\alpha>0$, and $0\leq R\leq K$ then

\[
\lambda_1\geq \min\{\frac{1}{2} \lambda^0_1, \quad \frac{\lambda^0_1m\pi}{2(\lambda^0_1 + K)} |\M|^{-\frac{1}{m}}\}.
\]

\end{corollary}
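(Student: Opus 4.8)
The plan is to feed the two standard geometric inputs available for a Fano manifold --- the $\partial\bar\partial$-lemma and the integrality of the first Chern class --- into Lemma~\ref{lambda1 lower bound R>0}, which already does all the hard analytic work. No new estimates are needed; the argument is essentially a substitution, and in fact it has been sketched in the paragraphs immediately preceding the statement.

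First I would invoke the $\partial\bar\partial$-lemma. Since $\mathbf{c}_1=\alpha[\omega]$, the Ricci form $\rho=\frac{\sqrt{-1}}{2}R_{i\bar j}dz^i\wedge d\bar z^j$ and the form $\pi\alpha\,\omega$ represent the same real cohomology class (with the factor $\pi$ being exactly the one dictated by our curvature conventions; cf. the identity $\int_\M R\,\omega^m=m\pi\int_\M\mathbf{c}_1\wedge[\omega]^{m-1}$ recorded above). Hence there is a real smooth function $L$ with $R_{i\bar j}=\pi\alpha\, g_{i\bar j}+\partial_i\partial_{\bar j}L$; taking the metric trace yields $R=m\pi\alpha+\Delta L$, and integrating over the closed manifold $\M$ annihilates the $\Delta L$ term, giving $\frac1{|\M|}\int_\M R=m\pi\alpha$.

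Second, I would use that $\mathbf{c}_1\in H^2(\M;\mathbb{Z})$: the quantity $\alpha^m|\M|=\int_\M(\alpha[\omega])^m=\int_\M\mathbf{c}_1^m$ is the evaluation of a top-degree integral cohomology class on the fundamental cycle, hence an integer, and it is strictly positive because $\mathbf{c}_1$ is a positive (Kähler) class. Therefore $\alpha^m|\M|\ge 1$, i.e. $\alpha\ge|\M|^{-1/m}$, and combining with the previous step we obtain $\frac1{|\M|}\int_\M R=m\pi\alpha\ge m\pi|\M|^{-1/m}$. Now I plug these two facts into Lemma~\ref{lambda1 lower bound R>0}: since $R\ge 0$ we have $\frac12\lambda^0_1+\inf_\M R\ge\frac12\lambda^0_1$, and since $R\le K$ together with the average bound just derived we have $\frac{\lambda^0_1}{2(\lambda^0_1+\sup R)}\cdot\frac1{|\M|}\int_\M R\ge\frac{\lambda^0_1 m\pi}{2(\lambda^0_1+K)}|\M|^{-1/m}$. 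Replacing each of the two terms in the minimum from Lemma~\ref{lambda1 lower bound R>0} by its lower bound gives the claimed inequality.

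There is no real obstacle here. The only points requiring a small amount of care are bookkeeping: that $\rho$ and $\pi\alpha\,\omega$ are cohomologous with precisely the stated constant $\pi$ under the sign and normalization conventions fixed in Section~2, and the assertion that $\int_\M\mathbf{c}_1^m$ is a \emph{positive integer}, which rests on integrality of $\mathbf{c}_1$ plus positivity of the Fano class rather than on anything analytic.
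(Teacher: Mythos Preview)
Your proposal is correct and follows exactly the paper's approach: the paragraphs preceding the corollary already derive $\frac{1}{|\M|}\int_\M R=m\pi\alpha\ge m\pi|\M|^{-1/m}$ via the $\partial\bar\partial$-lemma and the integrality/positivity of $\int_\M\mathbf{c}_1^m$, and the corollary is then obtained by substituting this bound together with $\inf_\M R\ge 0$ and $\sup_\M R\le K$ into Lemma~\ref{lambda1 lower bound R>0}. There is nothing to add or correct.
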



We end this section with a lower bound estimate for the first eigenvalue of 1 forms on some K\"ahler surfaces, which is similar in spirit to Theorem \ref{thmain}.

\begin{proposition}
 Suppose  $\M$ is a simply connected compact K\"ahler manifold of complex dimension 2 whose scalar curvature
is nonnegative and positive somewhere.  Let $\lambda^{(1)}_1$ be the
first eigenvalue of the Hodge Laplacian on $(1,0)$ forms. Then
\[
\lambda^{(1)}_1 \ge \min\{\frac{1}{2} \lambda^0_1, \quad \frac{\lambda^0_1}{2(\lambda^0_1 + \sup R)} \frac{1}{ |\M|}\int_{\M}R \}>0,
\]  where $\lambda^0_1$ is the first nonzero eigenvalue of the scalar Laplacian.

\end{proposition}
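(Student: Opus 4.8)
The plan is to realize a $(1,0)$ eigenform inside the canonical bundle by applying the operator $\partial$ --- which in complex dimension $2$ sends $(1,0)$ forms to $(2,0)$ forms, i.e.\ to sections of $K_\M$ --- and then to invoke Lemma~\ref{lambda1 lower bound R>0}. First I would note that simple connectivity forces $b_1(\M)=0$, so by Hodge theory $h^{1,0}=\tfrac12 b_1(\M)=0$; hence there is no nonzero harmonic $(1,0)$ form, and therefore $\lambda^{(1)}_1>0$. Fix an eigenform $\phi$ with $\Delta_d\phi=\lambda^{(1)}_1\phi$. Using the K\"ahler identities $\Delta_d=2(\partial\partial^*+\partial^*\partial)$, the commutation $[\partial,\Delta_d]=0$, and $\bar{\partial}^*\phi=0$ (for bidegree reasons), I would write $\tfrac12\lambda^{(1)}_1\phi=\partial\partial^*\phi+\partial^*\partial\phi$, and then distinguish two cases according to whether $\partial\phi$ vanishes.

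In the main case $\partial\phi\neq 0$: since $\partial\phi$ is then a nonzero section of $K_\M$ and $\Delta_d(\partial\phi)=\partial(\Delta_d\phi)=\lambda^{(1)}_1\,\partial\phi$, the value $\lambda^{(1)}_1$ belongs to the spectrum of the Hodge Laplacian on $L^2(\M,K_\M)$, so $\lambda^{(1)}_1\ge\lambda_1$. Then Lemma~\ref{lambda1 lower bound R>0} with $m=2$, combined with $\inf_\M R\ge 0$, yields
\[
\lambda^{(1)}_1\ \ge\ \lambda_1\ \ge\ \min\Big\{\tfrac12\lambda^0_1,\ \tfrac{\lambda^0_1}{2(\lambda^0_1+\sup R)}\,\tfrac{1}{|\M|}\int_\M R\Big\}.
\]
In the degenerate case $\partial\phi=0$: then $\partial\partial^*\phi=\tfrac12\lambda^{(1)}_1\phi$, so $\phi=\tfrac{2}{\lambda^{(1)}_1}\partial u$ with $u:=\partial^*\phi$ a smooth function, which cannot be constant (otherwise $\phi=0$). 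Applying $\partial^*$ to this identity gives $\partial^*\partial u=\tfrac12\lambda^{(1)}_1 u$, i.e.\ $\Delta_d u=\lambda^{(1)}_1 u$; since $\Delta_d=-\Delta^{\mathbb{R}}$ on functions and $u$ is nonconstant, $\lambda^{(1)}_1$ is a positive eigenvalue of the scalar Laplacian, so $\lambda^{(1)}_1\ge\lambda^0_1$, which again dominates the displayed minimum. Finally I would observe that this minimum is strictly positive: $\lambda^0_1>0$ because $\M$ is compact and connected, and $\int_\M R>0$ because $R\ge0$ and $R>0$ somewhere.

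The only real subtlety I anticipate is this dichotomy: for a ``generic'' $(1,0)$ eigenform, $\partial\phi$ realizes it inside the canonical bundle and Lemma~\ref{lambda1 lower bound R>0} applies verbatim, but one must separately dispatch the case $\partial\phi=0$, where $\phi$ is essentially $\partial$ of a scalar eigenfunction and one routes through the ordinary Laplacian (obtaining an even larger lower bound). Everything else is routine bookkeeping: tracking the normalization constants relating $\Delta_d$, $\partial\partial^*+\partial^*\partial$ and $\Delta^{\mathbb{R}}$, and using $b_1(\M)=0$ to guarantee $\lambda^{(1)}_1>0$ at the outset.
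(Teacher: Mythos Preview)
Your proposal is correct and follows essentially the same approach as the paper: split into the cases $\partial\phi\neq 0$ (apply Lemma~\ref{lambda1 lower bound R>0} to the $(2,0)$ eigenform $\partial\phi$) and $\partial\phi=0$ (route $\partial^*\phi$ through the scalar Laplacian to get $\lambda^{(1)}_1\ge\lambda^0_1$), using simple connectedness via $b_1=0$ to exclude harmonic $(1,0)$ forms. Your write-up is in fact a bit more careful with the K\"ahler normalization $\Delta_d=2\Delta_\partial$ than the paper's own proof.
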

\proof
Let $\alpha$ be an eigenform for $\lambda^{(1)}_1$, i.e.
\[
\Delta_d \alpha = \lambda^{(1)}_1 \alpha.
\]Then applying $\partial$ on both sides yields
\[
\Delta_d \partial \alpha = \lambda^{(1)}_1 \partial \alpha.
\] If $\partial \alpha \neq 0$, then it is a $(2, 0)$ eigenform. By  Lemma \ref{lambda1 lower bound R>0}, the stated lower bound for
$\lambda^{(1)}_1$ is true. If $\partial \alpha = 0$, then we notice that $\partial^* \alpha $ can not be a constant function. Otherwise we would have
\[
\Delta_d \alpha = \partial \partial^* \alpha + \partial^* \partial \alpha =0.
\] Hence $\alpha$ is a nontrivial harmonic 1 form whose existence means that the first Betti number of
$\M$ is not $0$, contradicting with the assumption that $\M$ is simply connected.
Consequently $\partial^* \alpha$ is a nonconstant solution to the scalar equation:
\[
-2 \Delta \partial^* \alpha = \Delta_d \partial^* \alpha = \lambda^{(1)}_1 \partial^* \alpha.
\] Since when acting on real functions, $2\Delta=-\Delta_d$ is the real Laplacian, we have
\[
\lambda^{(1)}_1  \ge \lambda^{0}_1.
\] This completes the proof of the proposition. \qed

\begin{remark}
From the proof above, we can see that the ``simply connectedness" assumption can be dropped. Instead, $\lambda_1^{(1)}$ will be considered as the first nonzero eigenvalue.
\end{remark}


\section{The Heat kernel estimates}

In this section we prove pointwise and gradient bound for the heat kernel of the Hodge Laplacian on $(m, 0)$ forms. Again, the main feature of the results is that the bounds of the heat kernel and its gradient only rely on the Ricci curvature bound instead of the full curvature bound.

We consider time dependent, smooth $(m,0)$ forms $\phi$ which satisfy the heat equation
\be\label{heat eq}
(\partial_t-\Delta)\phi=(\partial_t+\frac{1}{2} \Delta_d) \phi =0
\ee on $\M \times (0, T]$.  Here, for consistency with the usual heat equation, we put $1/2$ in front of the Hodge Laplacian and
$\Delta$ is the complex Laplacian.
Let $\vec{G}=\vec{G}(x, t, y)$ be the heat kernel, i.e. fundamental solution of (\ref{heat eq}) such that
\[
\lim_{t \to 0^+} \int \vec{G}(x, t, y) \psi(y) dg(y) = \psi(x)
\]for all smooth $(m, 0)$ forms $\psi$.  We have

\begin{theorem}
\lab{thhk}
Let $(\M, g_{i\bar{j}})$ be a compact K\"ahler manifold of complex dimension $m$. Suppose that the Ricci curvature satisfies $|Ric|\leq K$. Then there exist a positive constant $A_1$ depending on the Sobolev constant $C_S$ in \eqref{sob} and
dimensional constants $a_2$ and $a_3$ such that
\be
\lab{hkjie}
\al
|\vec{G}(x, t, y) | &\le \frac{A_1}{t^{m}} e^{a_2 K t}  e^{- a_3 d^2(x, y)/t},\\
|\nabla_x \vec{G}(x, t, y) | &\le \frac{A_1}{t^{m+(1/2)}} e^{a_2 K t}  e^{- a_3 d^2(x, y)/t}\ when\  m\geq 2.
\eal
\ee
\end{theorem}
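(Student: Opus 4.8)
The plan is to run the parabolic analogues of the elliptic arguments of Section 2: the Bochner identity $\eqref{bochm0}$ for $(m,0)$ forms controls the heat flow of $|\phi|^2$ and yields the pointwise bound, while the new Bochner formula of Lemma \ref{Bochner} controls the heat flow of $|\nabla\phi|^2$ and yields the gradient bound. First I would reduce matters to a scalar parabolic inequality. If $\phi$ solves $(\partial_t-\Delta)\phi=0$, then combining $\partial_t|\phi|^2=\langle\Delta\phi,\phi\rangle+\langle\phi,\Delta\phi\rangle$ with the identity $\Delta|\phi|^2=|\nabla\phi|^2+\langle\Delta\phi,\phi\rangle+\langle\phi,\Delta\phi\rangle+R|\phi|^2$, which is $\eqref{bochm0}$ rewritten via $\Delta=-\tfrac12\Delta_d$, gives
\[
(\partial_t-\Delta)|\phi|^2=-|\nabla\phi|^2-R|\phi|^2\le -|\nabla\phi|^2+mK|\phi|^2,
\]
since $|\Ric|\le K$ forces $R\ge -mK$. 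Hence $e^{-mKt}|\phi|^2$ is a nonnegative subsolution of the scalar heat equation, and only the scalar curvature, bounded by $mK$, enters.

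Since $\M$ is compact with $|\Ric|\le K$, the scalar Sobolev inequality $\eqref{sob}$ holds, and hence so does the associated parabolic Sobolev inequality. Applying the parabolic version of the Moser iteration used in Lemma \ref{MVI} to the subsolution $e^{-mKt}|\phi|^2$ — the manifold has real dimension $n=2m$, so the iteration produces the factor $t^{-n/2}=t^{-m}$ — gives the smoothing bound $\|e^{t\Delta}\psi\|_{L^\infty}\le A_1 t^{-m}e^{a_2Kt}\|\psi\|_{L^2}$ with $A_1$ depending only on $C_S$ and $a_2$ dimensional. By self-adjointness of $\Delta$ and the semigroup law $e^{t\Delta}=e^{(t/2)\Delta}\circ e^{(t/2)\Delta}$, this upgrades to an $L^1\to L^\infty$ bound, which is precisely the on-diagonal estimate $|\vec G(x,t,y)|\le A_1 t^{-m}e^{a_2Kt}$. (One could instead expand $\vec G$ in eigenforms and sum the mean value inequalities of Lemma \ref{MVI} against eigenvalue lower bounds, but the parabolic iteration is cleaner and needs no extra hypothesis on $R$.)

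To insert the Gaussian factor $e^{-a_3d^2(x,y)/t}$ I would use Davies' exponential weight method. For a bounded Lipschitz $\xi$ on $\M$ with $|\nabla\xi|\le 1$ and $\sigma>0$, differentiating $\int_\M e^{2\sigma\xi}|e^{t\Delta}\psi|^2$ in $t$, integrating by parts, and estimating the resulting cross term by Cauchy--Schwarz together with the differential inequality from the reduction above, one gets $\tfrac{d}{dt}\int_\M e^{2\sigma\xi}|e^{t\Delta}\psi|^2\le(\sigma^2+mK)\int_\M e^{2\sigma\xi}|e^{t\Delta}\psi|^2$, hence the weighted estimate $\|e^{\sigma\xi}e^{t\Delta}e^{-\sigma\xi}\|_{L^2\to L^2}\le e^{(\sigma^2+mK)t}$, a Davies--Gaffney bound with curvature correction. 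Combining this with the on-diagonal bound in the standard way — take $\xi$ to approximate $\dist(\cdot,y)$, write $e^{t\Delta}$ as a threefold product over time slices of length $t/3$, and optimize $\sigma\simeq d(x,y)/t$ — yields the claimed Gaussian upper bound for $|\vec G|$, with the extra $e^{mKt}$ absorbed into $e^{a_2Kt}$. I expect this to be the most delicate step: one must verify that the scalar curvature term $mK|\phi|^2$ coming from $\eqref{bochm0}$ contributes only an $e^{O(K)t}$ factor and does not degrade the $e^{-a_3d^2/t}$ spatial decay.

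For the gradient bound, set $\phi=\vec G(\cdot,\cdot,y)$ and repeat, in the time-dependent setting, the Moser iteration in the proof of Lemma \ref{lem gradient est} applied to $v=|\nabla\phi|^2+K|\phi|^2$: one differentiates $v$ in $t$, expands $\Delta|\nabla\phi|^2$ using the new Bochner formula $\eqref{bochM}$, and in the iteration integrates the $\phi\,\nabla R$ terms by parts against the $K|\phi|^2$ summand so that only the bounded tensors $R$ and $\Ric$ survive — exactly as in Section 2, so this term poses no new difficulty. Together with a Caccioppoli-type energy estimate this produces a parabolic mean value inequality $\sup_x|\nabla_x\vec G(x,t,y)|^2\le\tfrac{Ce^{cKt}}{t}\,\sup_{(x,s):\,s\simeq t}|\vec G(x,s,y)|^2$; substituting the on-diagonal bound $|\vec G|\le A_1t^{-m}e^{a_2Kt}$, and running the same weighted iteration as in the Davies step to carry over the spatial decay, gives $|\nabla_x\vec G(x,t,y)|\le A_1 t^{-m-1/2}e^{a_2Kt}e^{-a_3d^2(x,y)/t}$. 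The extra $t^{-1/2}$ is the parabolic cost of one spatial derivative, and the hypothesis $m\ge 2$ is inherited from Lemma \ref{lem gradient est}, whose Sobolev exponent $\tfrac{2m}{m-1}$ and use of $\eqref{bochM}$ require it.
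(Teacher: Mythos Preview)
Your plan is correct and would work, but the paper takes a shorter route at two points. For the pointwise bound, rather than running parabolic Moser iteration and then Davies' weighted $L^2$ argument, the paper observes that the subsolution inequality you derived, $(\partial_t-\Delta)\big(e^{-mKt}|\vec G(\cdot,t,y)|\big)\le 0$, together with delta initial data at $y$, already gives \emph{pointwise} domination $e^{-mKt}|\vec G(x,t,y)|\le G(x,t,y)$ by the scalar heat kernel via the maximum principle; the Gaussian factor then comes for free from the Li--Yau estimate for $G$. So the step you flag as ``most delicate'' is in fact a one-liner. For the gradient, the paper carries out essentially the same parabolic Moser iteration on $v=|\nabla\phi|^2+K|\phi|^2$ that you propose (this is their Proposition 3.3), but packages the output as a \emph{local} $L^2$ mean value inequality
\[
|\nabla_x\vec G(x,t,y)|^2\le \frac{A_0 e^{a_2Kt}}{t^{m+1}}\int_{B_{\sqrt{t}/2}(x)}|\vec G(z,t/2,y)|^2\,dz,
\]
and then simply substitutes the already-Gaussian pointwise bound on $|\vec G|$ inside the integral; since $d(z,y)\ge d(x,y)/2$ for $z\in B_{\sqrt{t}/2}(x)$, the spatial decay survives after a volume-comparison estimate of $|B_{\sqrt{t}/2}(x)|$. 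Thus no second Davies-type weighted iteration is needed. Your approach buys self-containment (you never invoke the scalar heat-kernel bound), while the paper's buys brevity by borrowing the off-diagonal decay from the scalar theory.
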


 In order to prove the theorem, we need two intermediate results.
The first result is a pointwise bound for $|\phi|$  via semigroup domination property.
Note we can also use Moser's iteration to get a similar result.

\begin{proposition}
Let $(\M^m,g_{i\bar{j}})$ be a compact K\"ahler manifold of complex dimension $m$ with $Ric \geq-K$, and $\phi=\phi(x,t)$ be a smooth $(m,0)$ form satisfying the heat equation \eqref{heat eq} on $[0,T]$. Then we have,
\be\label{global phi}
|\phi|^2(x, t) \leq \frac{A(m)}{t^m} e^{m K t}  \,  \Vert |\phi|(\cdot,0) \Vert^2_{L^2(\M)},\  t \in (0, T],
\ee
and \be\label{local phi}
\sup_{B_{\frac{\sqrt{T}}{2}}\times [\frac{3}{4}T, T]}|\phi|^2(x, t) \leq \frac{A(m)}{T^m} e^{m K T}  \,  \Vert |\phi|(\cdot,0) \Vert^2_{L^2(B_{\sqrt{T}})},
\ee
where $A(m)$ is a constant only depending on $m$ and $C_S$, the Sobolev constant in \eqref{sob}, and $B_r$ denotes the geodesic ball with radius $r$ centered at some point $O\in\M$.
\end{proposition}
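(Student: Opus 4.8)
The plan is to derive a differential inequality for $|\phi|^2$ of the form $(\partial_t - \Delta)|\phi|^2 \le 2mK|\phi|^2$ and then run Moser iteration in the parabolic setting, using the scalar Sobolev inequality \eqref{sob}. The starting point is the heat-Bochner identity: since $\phi$ solves $(\partial_t - \Delta)\phi = 0$ and $\Delta = -\frac12\Delta_d$ on $(m,0)$ forms, we combine the time derivative $\partial_t|\phi|^2 = \langle \partial_t\phi,\phi\rangle + \langle\phi,\partial_t\phi\rangle = \langle\Delta\phi,\phi\rangle + \langle\phi,\Delta\phi\rangle$ with the elliptic identity \eqref{bochm0}, namely $-\Delta_d|\phi|^2 = 2|\d\phi|^2 - \langle\Delta_d\phi,\phi\rangle - \langle\phi,\Delta_d\phi\rangle + 2R|\phi|^2$. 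Rewriting in terms of $\Delta$ this gives
\[
(\partial_t - \Delta)|\phi|^2 = -2|\d\phi|^2 - 2R|\phi|^2 \le -2|\d\phi|^2 + 2mK|\phi|^2,
\]
using $R \ge -mK$. Discarding the good term $-2|\d\phi|^2$ on the first pass (or keeping it when needed for the iteration) and setting $u = |\phi|^2 e^{-2mKt}$, we get $(\partial_t - \Delta)u \le 0$, i.e. $u$ is a subsolution of the heat equation. Actually for Moser iteration we retain the gradient term: from $(\partial_t-\Delta)|\phi|^2 \le -2|\d|\phi||^2 + 2mK|\phi|^2$ (Kato's inequality applied to the gradient term as in \eqref{Delta phi^2}), one obtains the required energy estimate for powers $|\phi|^{2p}$.

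The main body of the argument is the standard parabolic Moser iteration. Multiply the differential inequality for $f = |\phi|$ (or $|\phi|^2$) by $f^{2p-1}$ times a suitable cutoff $\eta^2$ in space (and, for the local estimate, in time), integrate by parts, and absorb cross terms to obtain
\[
\partial_t \int \eta^2 f^{2p} + \frac{1}{p}\int |\d(\eta f^p)|^2 \lesssim \left(pmK + \|\d\eta\|_\infty^2 + \|\partial_t\eta^2\|_\infty \right)\int_{\supp\eta} f^{2p}.
\]
Feeding this into the Sobolev inequality \eqref{sob} (which holds with constant $C_S$ since $|\Ric|\le K$) produces a reverse-Hölder gain with exponent $\alpha = \frac{m}{m-1}$ in the space variable and the parabolic scaling packaged via the $L^2$-in-time, $L^\infty$-in-time interpolation; iterating over $p = \alpha^k$ and summing the geometric series yields $\sup |\phi|^2 \le \frac{A(m)}{t^m} e^{mKt}\int |\phi|^2(\cdot,0)$ for the global bound \eqref{global phi}, and the cutoff version with $\eta$ supported in $B_{\sqrt T}$ and in time near $T$ gives the local bound \eqref{local phi}. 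The factor $t^{-m}$ arises exactly as in the scalar heat kernel case because the Sobolev dimension here is $2m$ (real dimension), and the iteration constant collapses to something depending only on $m$ and $C_S$. The $e^{mKt}$ factor tracks the $2mK|\phi|^2$ term through the exponential substitution.

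The step I expect to require the most care is bookkeeping the constants so that $A(m)$ genuinely depends only on $m$ and $C_S$ (and not on $T$, $K$, or the point $O$), and getting the correct power $t^{-m}$ rather than $t^{-2m}$ — this hinges on using the \emph{complex} dimension normalization $\Delta = -\frac12\Delta_d$ correctly, since the "heat equation" in \eqref{heat eq} is $\partial_t - \Delta$, so the effective diffusion is the real Laplacian up to a factor $\tfrac12$, and the Sobolev exponent $\frac{2m}{m-1}$ is the one for real dimension $n = 2m$. One must also be slightly careful that $\phi$ is a section of a bundle: but because \eqref{bochm0} reduces everything to a scalar differential inequality for $|\phi|$, no bundle-specific analysis beyond Kato's inequality is needed, and the iteration is purely scalar. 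For the local estimate the only additional point is the choice of space-time cutoff $\eta(x)\chi(t)$ with $\eta \equiv 1$ on $B_{\sqrt T/2}$, $\supp\eta \subset B_{\sqrt T}$, $|\d\eta|\lesssim T^{-1/2}$, and $\chi$ rising from $0$ to $1$ across $[T/2, 3T/4]$ with $|\chi'|\lesssim T^{-1}$, so all error terms scale like $T^{-1}$ and the iteration produces the clean $T^{-m}$ prefactor. Alternatively, as the statement remarks, one could invoke the semigroup domination $|e^{-t\Delta_d/2}\phi| \le e^{t(\Delta^{\mathbb R}/2 + mK)}|\phi|$ (a Kato-type inequality for the Hodge Laplacian, valid since the Weitzenböck curvature term here is bounded below by $-2mK$) and then quote the scalar Li–Yau / Moser heat kernel bound, but I would present the direct Moser iteration since it is self-contained given \eqref{sob} and \eqref{bochm0}.
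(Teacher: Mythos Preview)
Your approach via parabolic Moser iteration is valid and would yield the stated bounds, but it is not the route the paper takes. One computational slip: the correct identity is $(\partial_t-\Delta)|\phi|^2 = -|\nabla\phi|^2 - R|\phi|^2$, not with the extra factor of $2$ you wrote (trace through $\Delta|\phi|^2 = -\tfrac12\Delta_d|\phi|^2$ and $\Delta\phi = -\tfrac12\Delta_d\phi$ carefully); this only shifts the exponential between $e^{mKt}$ and $e^{2mKt}$ and does not affect the method.

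The paper instead uses exactly the semigroup-domination alternative you mention at the end. From $(\partial_t-\Delta)(e^{-mKt}|\phi|^2)\le 0$ and the maximum principle it gets $e^{-mKt}|\phi|^2(x,t)\le \int_\M G(x,t,y)|\phi(y,0)|^2\,dy$ for the scalar heat kernel $G$, and then quotes the Li--Yau bound $G(x,t,y)\le A(m)(t^{-m}+1)e^{-C_2 d^2(x,y)/t}$ (which holds under the Ricci lower bound, with constant controlled by $C_S$) to obtain \eqref{global phi} directly. For the local estimate \eqref{local phi} the paper multiplies \eqref{heat phi} by a space-time cutoff $\psi^2$ and applies Duhamel's formula together with the same scalar heat kernel bound, rather than iterating. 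Your direct Moser iteration is more self-contained since it does not invoke an external heat kernel estimate, whereas the paper's argument is shorter once Li--Yau is taken as known; the paper in fact remarks that Moser iteration would give a similar result, so your proposal matches that parenthetical comment rather than the proof actually given.
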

\begin{proof}

Let $\phi$ be a solution to \eqref{heat eq}
Then
\[
\pa_t|\phi|^2=\Delta\phi\cdot\bar{\phi}+\phi\cdot\overline{\Delta\phi}=\Delta\phi\cdot\bar{\phi}+\phi\cdot\d_{\bar{i}}\d_i\bar{\phi},
\]
and
\[
\al
\Delta|\phi|^2&=\d_i\left(\d_{\bar{i}}\phi\cdot \bar{\phi}+\phi\cdot\d_{\bar{i}}\bar{\phi}\right)\\
&=\Delta\phi\cdot\bar{\phi}+\d_{\bar{i}}\phi\d_i\bar{\phi}+\d_i\phi\d_{\bar{i}}\bar{\phi}+\phi\d_i\d_{\bar{i}}\bar{\phi}\\
&=\Delta\phi\cdot\bar{\phi}+\phi\cdot\d_{\bar{i}}\d_i\bar{\phi}+R|\phi|^2+|\d\phi|^2.
\eal
\]

Therefore, $|\phi|$, the norm of $\phi$ satisfies the scalar equation:
\be\label{heat phi}
(\partial_t-\Delta)|\phi|^2=-R|\phi|^2-|\d\phi|^2.
\ee

From \eqref{heat phi} and the lower bound of the scalar curvature coming from $Ric$ lower bound assumption, we have
\[
(\partial_t-\Delta)|\phi|^2\leq m K|\phi|^2-|\d|\phi||^2.
\]
It implies that
\be
\lab{phisubso}
(\partial_t-\Delta)( e^{-m K t} |\phi|^{2} ) \le 0.
\ee Let $G=G(x, t, y)$ be the heat kernel of the standard scalar heat equation. The maximum principle infers that
\be
\lab{mktphi}
e^{-m K t} |\phi|^{2}(x, t) \le \int_\M G(x, t, y) |\phi(y, 0)|^2 dy.
\ee According to \cite{LY:2}, since $\M$ is compact, there exist a constant $C_1$ depending on $m$ and $|\M|$, and a dimensional positive constant $C_2$ such that
\be
\lab{ggub}
G(x, t, y) \le \frac{C_1}{|B(x, \sqrt{t})|} e^{-C_2 d^2(x, y)/t}.
\ee In fact $G(x, t, y)$ converges to $1/|\M|$ as $t \to \infty$. Since the Sobolev constant $C_S$ is a finite number, there exists another positive number, $A(m)$, depending on $C_S$, $|\M|$ and the dimension such that
\be
\lab{ggub2}
G(x, t, y) \le A(m) \left(\frac{1}{t^m} +1 \right) e^{-C_2 d^2(x, y)/t}.
\ee Substituting this to \eqref{mktphi} gives \eqref{global phi}.

For \eqref{local phi}, one just need a cut-off function $\psi(x,t)=\xi(d(x,O))\eta(t)$, where $\xi: \ \mathbb{R}\rightarrow [0,1]$ satisfies $\xi(u)=1$ in $[0, \frac{\sqrt{T}}{2}]$, $\xi(u)=0$ on $[\sqrt{T},\infty]$, $-\frac{8}{\sqrt{T}}\leq \xi'\leq 0$, $|\xi''|\leq \frac{8}{T}$, and $\eta:\ \mathbb{R}\rightarrow[0,1]$ satisfies $\eta(s)=1$ in $[\frac{3T}{4}, T]$, $\xi(u)=0$ on $[0,\frac{T}{2}]$, $0\leq \eta'(s)\leq \frac{8}{T}$. Then it is not hard to check that \eqref{local phi} follows by multiplying \eqref{heat phi} by $\psi^2$ and applying Duhamel's formula and the heat kernel bound \eqref{ggub2}.

\end{proof}

Further more, we can get similar gradient estimate for $\phi$, assuming two sided bound of the Ricci curvature. Here we need to integrate out the gradient of the scalar curvature appeared in the Bochner formula.

\begin{proposition}
Let $(\M^m,g_{i\bar{j}})$ be a compact K\"ahler manifold of complex dimension $m\geq 2$ with $|\Ric|\leq K$, $\phi=\phi(x,t)$ a smooth $(m,0)$ form satisfying the heat equation \eqref{heat eq} on $[0,T]$. Then we have

\[
\sup_{B_{\frac{\sqrt{T}}{4}}\times[\frac{15}{16}T,T]}|\d\phi|^2\leq \frac{A(m)e^{C(m)KT}}{T^{m+1}}||\phi(\cdot,0)||^2_{L^2(B_{\sqrt{T}/2})}.
\] Here $B_r$ denotes a geodesic ball of radius $r$ centered at some point $O\in\M$, $A(m)$ is a constant depending on $m$ and the Sobolev constant $C_S$ in \eqref{sobolev}, and $C(m)$ is just a dimensional constant.

\end{proposition}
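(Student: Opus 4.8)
The plan is to run a localized parabolic Moser iteration for
\[
v=|\d\phi|^2+K|\phi|^2 .
\]
Assume first $K>0$. (When $K=0$ the Ricci tensor vanishes, hence $R\equiv0$, and the curvature‑free identity \eqref{bochcy} together with $\partial_t\phi=\Delta\phi$ gives $(\partial_t-\Delta)|\d\phi|^2=-|\d^2\phi|^2\le0$, after which one iterates directly on $|\d\phi|^2$ exactly as in Lemma \ref{MVI}.) The first task is the parabolic differential inequality: since $\partial_t\phi=\Delta\phi$, differentiating $|\d\phi|^2$ in time cancels the terms $<\d\Delta\phi,\d\phi>+<\d\phi,\d\Delta\phi>$ in the Main Lemma \eqref{bochM}, so that
\[
(\partial_t-\Delta)|\d\phi|^2=-|\d^2\phi|^2-R|\d\phi|^2-2\,\mathrm{Re}\big(\phi\,\d_jR\,\overline{\d_j\phi}\big)-3<R_{j\bar{k}}\d_k\phi,\d_j\phi>+<R_{k\bar{j}}\d_{\bar k}\phi,\d_{\bar j}\phi> .
\]
Combining this with $(\partial_t-\Delta)|\phi|^2=-R|\phi|^2-|\d\phi|^2$ (which is \eqref{heat phi}), the bound $|\Ric|\le K$, and the choice of the coefficient $K$ in $v$, one obtains
\[
(\partial_t-\Delta)v\le -|\d^2\phi|^2+C(m)K\,v-2\,\mathrm{Re}\big(\phi\,\d_jR\,\overline{\d_j\phi}\big).
\]
The decisive point is that the good term $-|\d^2\phi|^2$ is kept; it will pay for the scalar curvature gradient, for which no bound is available.

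Next I would prove a local energy (Caccioppoli) estimate. Fix $p\ge1$ and a space–time cutoff $\psi=\xi(\dist(x,O))\eta(t)$ at scale $\sqrt T$, as in the preceding Proposition and with $\eta\equiv1$ near $t=T$. Multiplying the last inequality by $v^{2p-1}\psi^2$ and integrating over $\M\times[0,s]$, every term is directly controllable except $-2\,\mathrm{Re}\int_0^s\!\!\int_\M(\phi\,\d_jR\,\overline{\d_j\phi})v^{2p-1}\psi^2$. Since $R$ is real, $2\,\mathrm{Re}\int_\M\d_j(RF_j)\,dg$ is the integral of a real divergence and hence vanishes, so I may integrate by parts in $z_j$ exactly as in the proof of Lemma \ref{lem gradient est}: the derivative lands on $\phi\,\overline{\d_j\phi}\,v^{2p-1}\psi^2$ and produces, schematically, $2\,\mathrm{Re}\iint R\big(|\d_j\phi|^2\psi^2+\phi\,\overline{\Delta\phi}\,\psi^2\big)v^{2p-1}+2\,\mathrm{Re}\iint R\,\phi\,\overline{\d_j\phi}\big(\psi^2\,\d_jv^{2p-1}+2v^{2p-1}\psi\,\d_j\psi\big)$. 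The first, third and fourth pieces are handled as in the elliptic case, using $|\phi\,\overline{\d_j\phi}|\le K^{-1/2}v$ (which is precisely why the summand $K|\phi|^2$ was added to $v$): after Cauchy–Schwarz they are absorbed into a small multiple of $\iint|\d v^p|^2\psi^2$ plus $C(m)\,p^2(K+T^{-1})\iint v^{2p}$ plus cutoff terms bounded by $C(m)T^{-1}\iint v^{2p}$. For the middle piece I use the heat equation once more: $2\,\mathrm{Re}(R\phi\overline{\Delta\phi})=R\,\partial_t|\phi|^2$ and $|R\,\partial_t|\phi|^2|\le 2mK|\phi|\,|\Delta\phi|\le 2m^{3/2}K|\phi|\,|\d^2\phi|\le\tfrac12|\d^2\phi|^2+2m^3K|\phi|^2$, so after integration this contributes $\tfrac12\iint|\d^2\phi|^2v^{2p-1}\psi^2$, which is swallowed by the good term $-\iint|\d^2\phi|^2v^{2p-1}\psi^2$, together with a harmless $2m^3K\iint v^{2p}\psi^2$. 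In particular no integration by parts in time is needed, so no boundary term at $t=T$ appears. Together with the dissipation from $-\Delta v$ this gives
\[
\sup_{0\le s\le T}\int_\M(v^p\psi)^2(\cdot,s)+\int_0^T\!\!\int_\M|\d(v^p\psi)|^2\le C(m)\,p^2\,(K+T^{-1})\int_0^T\!\!\int_{\supp\psi}v^{2p}.
\]

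The remainder is routine. Feeding this into the parabolic Sobolev inequality at scale $\sqrt T$ — a standard consequence of the elliptic Sobolev inequality \eqref{sob} and volume comparison, used already in the previous Proposition — produces an $L^{2\beta}$–$L^2$ gain with $\beta=1+\tfrac1m$; iterating over parabolic cylinders shrinking from $B_{\sqrt T/2}\times[\tfrac34T,T]$ to $B_{\sqrt T/4}\times[\tfrac{15}{16}T,T]$ and collecting the geometrically convergent product of constants (the powers of $(K+T^{-1})$ combining into a factor $\le e^{C(m)KT}$) gives
\[
\sup_{B_{\sqrt T/4}\times[\frac{15}{16}T,T]}v\le\frac{A(m)\,e^{C(m)KT}}{T^{m+1}}\int_{\frac34T}^{T}\!\!\int_{B_{\sqrt T/2}}v .
\]
Finally I bound $\iint v=\iint(|\d\phi|^2+K|\phi|^2)$ over $B_{\sqrt T/2}\times[\tfrac34T,T]$: the term $K|\phi|^2$ is estimated by $K$ times the volume of the cylinder times $\sup|\phi|^2$, while $\iint|\d\phi|^2$ is obtained by testing $(\partial_t-\Delta)|\phi|^2=-R|\phi|^2-|\d\phi|^2$ against a cutoff supported in $\{t\ge T/2\}$ — so the $t=T$ boundary term has the favorable sign — and absorbing the cross term $\iint\d\chi^2\cdot\d|\phi|^2$. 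Both are then dominated by $\sup_{B_{\sqrt T/2}\times[T/2,T]}|\phi|^2$, to which the local estimate \eqref{local phi} of the preceding Proposition applies after rescaling the radii and translating in time, so that the source norm is $\|\phi(\cdot,0)\|_{L^2(B_{\sqrt T/2})}$. Assembling the pieces yields the stated inequality.

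I expect the only real obstacle to be the scalar curvature gradient $\d R$, for which nothing is assumed: the mechanism is that a single spatial integration by parts (legitimate thanks to the real‑part structure) turns this term into terms that are either $R\times(\text{bounded})$ or $R\,\phi\,\overline{\Delta\phi}=\tfrac12 R\,\partial_t|\phi|^2$, and the latter is then dominated, via the heat equation, by the term $-|\d^2\phi|^2$ that the new Bochner identity \eqref{bochM} supplies — so that, unlike in classical parabolic gradient estimates, no integration by parts in time and no boundary term at $t=T$ are needed. Everything else is standard bookkeeping, the only care being to keep all constants depending solely on $m$, $C_S$, $K$, $T$, and to pass from the ball $B_{\sqrt T}$ appearing in \eqref{local phi} to the ball $B_{\sqrt T/2}$ in the statement.
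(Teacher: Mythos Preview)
Your proposal is correct and follows essentially the same strategy as the paper: set $v=|\d\phi|^2+K|\phi|^2$, derive a parabolic differential inequality from the Bochner formula \eqref{bochM}, integrate the $\d R$ term by parts in space and absorb the resulting $R\phi\overline{\Delta\phi}$ piece using the good term $-|\d^2\phi|^2$, then run a localized parabolic Moser iteration and close with an energy estimate together with \eqref{local phi}. The only differences are cosmetic: the paper bounds the $R\phi\overline{\Delta\phi}$ term directly by Cauchy--Schwarz rather than rewriting it as $\tfrac12 R\,\partial_t|\phi|^2$, and it inserts the Li--Schoen $L^2\to L^1$ trick after the iteration (a step you elide but which is routine).
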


\proof From the Bochner formula \eqref{bochM}, we have
\be\label{heat d phi}
\al
&(\partial_t-\Delta)|\d\phi|^2\\
=& -|\d^2\phi|^2-R|\d\phi|^2-\phi\d_jR\d_{\bar{j}}\bar{\phi}-\bar{\phi}\d_j\phi\d_{\bar{j}}R-3R_{j\bar{k}}\d_j\phi\d_{\bar{j}}\bar{\phi}+R_{j\bar{k}}\d_{\bar{j}}\phi\d_{k}\bar{\phi}.
\eal
\ee
Let $$v=|\d\phi|^2+A|\phi|^2$$
with $A$ to be determined.  Similar to the Lemma \ref{lem gradient est}, if $K=0$, we may simply set $A=0$. In the following we assume that $K>0$ and $A>0$. The reason for adding $A|\phi|^2$ in $v$ is that in \eqref{int by parts1} below, after integration by parts, the term $|\phi|^2v^{2p-1}$ comes up, then by the definition of $v$, this term can be controlled by $A^{-1}v^{2p}$ so that the power of $v$ becomes the same with the other terms, and makes it more convenient for iteration.

Combine \eqref{heat phi} and \eqref{heat d phi}, we have
\be\label{heat v2p}
\al
(\partial_t-\Delta)v^{p}\leq & -p|\d^2\phi|^2v^{p-1}+ p(m+4) Kv^{p}-p\phi\d_jR\d_{\bar{j}}\bar{\phi}v^{2p-1}-p\bar{\phi}\d_j\phi\d_{\bar{j}}Rv^{p-1}\\
& -p(p-1)v^{p-2}|\d v|^2.
\eal
\ee
For positive numbers $\delta<\sigma$, let $\eta(s)$ a cut-off function satisfying $\eta=0$ on $[0, T-\sigma r^2]$, $\eta=1$ on $[T-\delta r^2, T]$, and $0\leq \eta'(s)\leq \frac{2}{(\sigma-\delta)r^2}$. For positive numbers $\mu<\nu$, let $\xi(u)$ be a cut-off function such that $\xi=1$ on $[0,\mu r]$, $\xi=0$ on $[\nu r,\infty]$, and $-\frac{2}{(\nu-\mu)r}\leq\xi'\leq 0$.

Set $\psi=\xi(x)\eta(t)$. Multiplying both sides of \eqref{heat v2p} by $\psi^2v^p$ and taking integral over $\M\times[0,T]$ yield
\be\label{eq3.3}
\al
&\iint_{\M\times[0,T]} \psi^2v^p(\partial_t-\Delta)v^{p}\\
\leq& \iint_{\M\times[0,T]} p\Big[-\frac{1}{m}|\Delta\phi|^2+ (m+4)K-(\phi\d_jR\d_{\bar{j}}\bar{\phi}+\bar{\phi}\d_j\phi\d_{\bar{j}}R)\Big]\psi^2v^{2p-1}\\
& -\iint_{\M\times[0,T]}p(p-1)\psi^2v^{2p-2}|\d v|^2.
\eal
\ee
On the other hand
\be\label{eq3.4}
\al
\iint_{\M\times[0,T]} \psi^2v^p(\partial_t-\Delta)v^{p}=&\frac{1}{2}\int_\M\psi^2v^{2p}\Big|_T -\iint_{\M\times[0,T]}\psi v^{2p}\frac{\partial\psi}{\partial t}\\
& +\iint_{\M\times[0,T]}\Big(|\d(\psi v^p)|^2-v^{2p}|\d\psi|^2\Big).
\eal
\ee
Therefore, combining \eqref{eq3.3} and \eqref{eq3.4} shows that
\be\label{eq3.5}
\al
&\frac{1}{2}\int_\M\psi^2v^{2p}\Big|_T+\iint_{\M\times[0,T]}|\d(\psi v^p)|^2\\
\leq &\iint_{\M\times[0,T]} p\Big[-\frac{1}{m}|\Delta\phi|^2+ (m+4)K-(\phi\d_jR\d_{\bar{j}}\bar{\phi}+\bar{\phi}\d_j\phi\d_{\bar{j}}R)\Big]\psi^2v^{2p-1}\\
&-\iint_{\M\times[0,T]}p(p-1)\psi^2v^{2p-2}|\d v|^2+\iint_{\M\times[0,T]}\left(\psi\frac{\partial\psi}{\partial t}+|\d\psi|^2\right)v^{2p}
\eal
\ee
Similar to \eqref{estimate 2re}, using integration by parts and Cauchy-Schwarz inequality give
\be\label{int by parts1}
\al
&-\int_{\M}\left(\phi\d_jR\d_{\bar{j}}\bar{\phi}+\bar{\phi}\d_j\phi\d_{\bar{j}}R\right)\psi^2 v^{2p-1}\\
\leq & \int_\M \Big[ C(m)(K+pK^2A^{-1})\psi^2v^{2p}+\frac{1}{m}|\Delta\phi|^2\psi^2v^{2p-1}+\frac{1}{2}|\d(\psi v^p)|^2+\frac{1}{p}\psi^2v^{2p-2}|\d v|^2\Big].
\eal
\ee
Also,
\be\label{estimate psi}
\al
\psi\frac{\partial\psi}{\partial t}+|\d\psi|^2=\psi\xi\eta'+\eta^2|\d\xi|^2\leq 4\left(\frac{1}{(\sigma-\delta)r^2}+\frac{1}{(\nu-\mu)^2r^2}\right).
\eal
\ee
Thus, by plugging \eqref{int by parts1} and \eqref{estimate psi} into \eqref{eq3.5}, and setting $A=K$, it follows that
\be\label{int by parts}
\al
&\int_\M\psi^2v^{2p}\Big|_T+\iint_{\M\times[0,T]}|\d(\psi v^p)|^2\\
\leq & p^2C(m)\left(K+\frac{1}{(\sigma-\delta)r^2}+\frac{1}{(\nu-\mu)^2r^2}\right)\iint_{\M\times[0,T]}(\psi v^p)^2
:=p^2L\iint_{\M\times[0,T]}(\psi v^p)^2,
\eal
\ee
where $L=C(m)\left(K+\frac{1}{(\sigma-\delta)r^2}+\frac{1}{(\nu-\mu)^2r^2}\right)$.
By the definition of $\psi$, it is not hard to deduce from above that
\be\label{eq6}
\al
&\int_{B_{\mu r}}v^{2p}\vert_{T'}+\iint_{B_{\mu r}\times[T-\delta r^2, T]}|\d v^p|^2
\leq p^2L\iint_{B_{\nu r}\times[T-\sigma r^2,T]}v^{2p},
\eal
\ee
for any $T'\in[T-\delta r^2, T]$.

For $Ric\geq -K$, the following local Sobolev inequality holds (see e.g. \cite{Sa}).
\[
\left(\int_{B_r}|u|^{\frac{2m}{m-1}}\right)^{\frac{m-1}{m}}\leq C(r)\left[\int_{B_r}|\d u|^2+r^{-2}\int_{B_r}|u^2|\right],
\]
where $u\in C^{\infty}(\M)$, and $C(r)=e^{C(m)(1+\sqrt{K}r)}|B_r|^{-\frac{1}{m}}r^2$.
It implies that
\be\label{eq3'}
\iint_{B_{\mu r}\times[T-\delta r^2, T]}|\d v^p|^2\geq C(\mu r)^{-1}\int_{T-\delta r^2}^T\left(\int_{B_{\mu r}}v^{2p\a}\right)^{1/\a}ds-(\mu r)^{-2}\iint_{B_{\nu r}\times[T-\sigma r^2, T]}v^{2p}.
\ee
Moreover, from \eqref{eq6}, we can see that
\[
\al
&\left(\int_{T-\delta r^2}^T\left(\int_{B_{\mu r}}v^{2p\a}\right)^{1/\a}ds\right)\cdot\left(\iint_{B_{\nu r}\times[T-\sigma r^2, T]}v^{2p}\right)^{1/m}\\
\geq &(p^2L)^{-1/m} \left(\int_{T-\delta r^2}^T\left(\int_{B_{\mu r}}v^{2p\a}\right)^{1/\a}ds\right)\cdot\left(\sup_{[T-\delta r^2, T]}\int_{B_{\mu r}}v^{2p}\right)^{1/m}\\
\geq &(p^2L)^{-1/m} \int_{T-\delta r^2}^T\left(\int_{B_{\mu r}}v^{2p\a}\right)^{1/\a}\left(\int_{B_{\mu r}}v^{2p}\right)^{1/m}ds\\
\geq &(p^2L)^{-1/m}\left(\iint_{B_{{\mu r}}\times[T-\delta r^2, T]} v^{2p(1+\frac{1}{m})}\right).
\eal
\]
Multiplying both sides of \eqref{eq6} by $\left(\iint_{B_{\nu r}\times[T-\sigma r^2, T]}v^{2p}\right)^{1/m}$, and applying the estimates above, we derive
\be\label{iteration}
\al
&\iint_{B_{\mu r}\times[T-\delta r^2, T]} v^{2p(1+\frac{1}{m})}\\
\leq & C(\mu r)\left[p^2L+(\mu r)^{-2}\right](p^2L)^{\frac{1}{m}}\cdot\left(\iint_{B_{\nu r}\times[T-\sigma r^2,T]} v^{2p}\right)^{1+\frac{1}{m}}\\
\leq & p^3e^{C(m)(1+\sqrt{K}\mu r)}|B_{\mu r}|^{-\frac{1}{m}}\mu^{-2}\left(\mu^2r^2L+1\right)(L)^{\frac{1}{m}}\left(\iint_{B_{\nu r}\times[T-\sigma r^2,T]} v^{2p}\right)^{1+\frac{1}{m}}.
\eal
\ee
Now, suppose that $a\in[\frac{1}{2},1],\ \tau\in (0,1]$. Let $\beta=1+\frac{1}{m}$, and for $k=1,2,\cdots,$ set $p=\beta^{k-1}$, $\sigma=a^2+\frac{(a+\tau)^2-a^2}{2^{k-1}}$, $\delta=a^2+\frac{(a+\tau)^2-a^2}{2^{k}}$, $I_k=[T-(a^2+\frac{(a+\tau)^2-a^2}{2^{k}})r^2, T]$, $\mu=a+\frac{\tau}{2^k}$, $\nu=a+\frac{\tau}{2^{k-1}}$, and $r_k=(a+\frac{\tau}{2^k})r$, then \eqref{iteration} becomes
\[
\al
||v^2||_{\beta^k,B_{r_k}\times I_k}\leq & \beta^{3(k-1)\beta^{-k}}e^{C(m)(1+\sqrt{K}(a+2^{-k}\tau )r)\beta^{-k}}|B_{ ar}|^{-\frac{1}{m}\beta^{-k}}\\
&\cdot\left[C(m)2^{2k}e^{\sqrt{K}r}\tau^{-1}\right]^{(1+\frac{1}{m})\beta^{-k}}r^{-\frac{2}{m}\beta^{-k}}||v^2||_{\beta^{k-1},B_{r_{k-1}}\times I_{k-1}}.
\eal
\]
Using iteration on $k$, then letting $k\rightarrow 0$, we have
\[
\max_{B_{ar}\times[T-(ar)^2,T]}v^2\leq \frac{C_0(m)e^{C(m)\sqrt{K}r}}{r^2|B_{ar}|}\tau^{-(m+1)}||v^2||_{1,B_{(a+\tau)r}\times [T-[(a+\tau)r]^2,T]},
\]
where $C_0(m)=2^{8m(m+1)}m^{2(m+1)}$.

Next, we may use a method in \cite{LS} to reduce the $L^2$ mean value inequality above to $L^1$ mean value inequality. Let $r_j=(\sum_{k=0}^j2^{-k})r$, $Q_j=B_{r_k}\times [T-r_j^2,T]$ for $j=0,1,2,\cdots$. Then $Q_0=B_r\times[T-r^2, T]\subset Q_1\subset\cdots\subset Q_j\subset\cdots \subset B_{2r}\times[T-(2r^2), T]$, and
\[
\al
\sup_{Q_{j}}v^2\leq& A_0 2^{(j+1)(m+1)}\iint_{Q_{j+1}}v^2\leq A_0 2^{(j+1)(m+1)}\left(\sup_{Q_{j+1}}v^2\right)^{\frac{1}{2}}\iint_{B_{2r}\times[T-(2r)^2,T]}v,
\eal
\]
where $A_0=\frac{C(m)e^{C(m)\sqrt{K}r}}{r^2|B_{r}|}$. Denote by $\displaystyle \hat{A}=A_0 \iint_{B_{2r}\times[T-(2r)^2,T]}v$ and run iteration, we get
\[
\sup_{Q_0}v^2\leq \hat{A}^{\sum_{k=0}^j2^{-k}}2^{(m+1)\sum_{k=0}^j(k+1)2^{-k}}(\sup_{Q_j}v^2)^{2^{-j}}.
\]
Letting $j\rightarrow \infty$, one can see that
\[
\sup_{Q_0}v^2\leq C(m)\hat{A}^2,
\]
i.e.,
\be\label{eq7}
\sup_{B_r\times [T-r^2,T]}v\leq \frac{C(m)e^{C(m)\sqrt{K}r}}{r^2|B_{r}|}\iint_{B_{2r}\times[T-(2r)^2,T]}v.
\ee

At last, let us bound the right hand side of the previous inequality.
Multiplying \eqref{heat phi} by the spatial cutoff function $\xi^2$ and doing the standard energy estimate, taking $r=\frac{1}{4}\sqrt{T}$, we can deduce with the assistance of \eqref{local phi} that
\be
\label{eq9}
\al
\iint_{B_{\sqrt{T}/2}\times[3T/4,T]}&\left( |\d\phi|^2 + |\phi|^2 \right) \xi^2
\leq & C(m)e^{mKT}||\phi(x,0)||^2_{L^2(B_{\sqrt{T}})}.
\eal
\ee
Now the conclusion of the proposition follows from \eqref{eq7}, \eqref{eq9} and the volume comparison.
\qed\\

Now we are ready to give a
\medskip

\noindent {\it  Proof of Theorem \ref{thhk}.}

{\it Step 1.}  The pointwise bound for $\vec{G}$ follows quickly from \eqref{phisubso}. Fixing $y$,
let $\phi(x, t) = \vec{G}(x, t, y)$. Then, \eqref{phisubso} infers that
\[
(\partial_t-\Delta)( e^{-m K t} |\phi(x, t)| ) \le 0.
\]Hence $e^{-m K t} |\phi(x, t)|$ is dominated by $G(x, t, y)$, the heat kernel of the scalar Laplacian.
So the pointwise bound in \eqref{hkjie} for $|\vec{G}|$ follows from \eqref{ggub2}.
\medskip

{\it Step 2 .} We prove the gradient bound.
\be
\lab{hkjie0}
|\nabla_x \vec{G}(x, t, y) | \le \frac{A_1}{t^{m+(1/2)}} e^{a_2 K t}.
\ee  where $A_1$ depends only on $K$, $m$ and $C_S$ and $a_2$ depends only on $m$.

Let $\phi$ be as in Step 1.  We apply Proposition 3.3 on the region $B_{\sqrt{t}/2}(x) \times [t/2, t]$ with $t/2$ taken as the initial time. Note that we are free to adjust the total time interval by a fixed factor. This gives
\be\label{MV heat kernel}
|\nabla \phi(x, t) |^2 \le \frac{A_0 e^{a_2Kt}}{t^{m+1}}  \int_{B_{\sqrt{t}/2}(x)} |\vec{G}(z, t/2, y)|^2 dz.
\ee
Here $A_0$ depends only on $K$, $m$ and $C_S$. From the pointwise bound proven in Step 1, after routine computation,  this implies
\[
|\nabla \phi(x, t) |^2 \le \frac{A_1 e^{a_2Kt}}{t^{2 m+1}}
\]proving \eqref{hkjie0}.
\medskip

{\it Step 3 .} Completion of the proof of the gradient bound in \eqref{hkjie}.

Since the manifold is compact, the exponential term $e^{- a_3 d^2(x, y)/t}$ is mute for $t \ge 1$.  So, by Step 2,  we only need to deal with the case when $t \in (0, 1]$ and $d^2(x, y) \ge 4 t$.

Now from \eqref{MV heat kernel} and using the bound on $|\vec{G}|$ and the property that $d(z, y) \ge d(x, y)/2$ for $z \in {B_{\sqrt{t}/2}(x)}$,
we deduce
\be
\al
|\nabla \phi(x, t) |^2
&\le \frac{A_0 e^{a_2 Kt}}{t^{m+1}}  \int_{B_{\sqrt{t}/2}(x)} \frac{A^2_1}{t^{2 m}} e^{ a_2 K t}  e^{- 4 a_3 d^2(z, y)/t} dz\\
&\le \frac{A_0 e^{a_2 Kt}}{t^{m+1}}  \Big|B_{\sqrt{t}/2}(x)\Big| \frac{A^2_1}{t^{2 m}} e^{ a_2 K t}  e^{-  a_3 d^2(x, y)/t}.
\eal
\ee  The desired gradient bound then follows by volume comparison, after a suitable adjustment of constants.
\qed

\section*{Acknowledgements}

Z. L. is supported by NSF grant DMS-19-08513. Q. S. Z. is supported by the Simons foundation grant 710364. M. Z. is supported by Science and Technology Commission of Shanghai Municipality (STCSM) No. 18dz2271000.



\begin{thebibliography}{00}

\bibitem[Bo]{Bo} Bochner, S., {\it Vector fields and Ricci curvature}, Bull. Amer. Math. Soc. 52 (1946), 776-797.


\bibitem[Cro]{Cro} Croke, C., {\it Some isoperimetric inequalities and eigenvalue estimates}, Ann. Sci. \'Ecole Norm. Sup. 13 (1980), no. 4, 419-435.

\bibitem[CL]{CL} Charalambous, N.; Lu, Z., {\it The spectrum of continuously perturbed operators and the Laplacian on forms}. Differential Geom. Appl. 65 (2019), 227-240.

\bibitem[CT]{CT} Chanillo, S.; Treves, F., {\it
On the lowest eigenvalue of the Hodge Laplacian.}
J. Differential Geom. 45 (1997), no. 2, 273-287.

\bibitem[Do]{Do} Dodziuk, J., {\it Eigenvalues of the Laplacian on Forms,} Proc. Amer. Math. Soc. 85 (1982), no. 3, 437-443.

\bibitem[Ko]{Ko} Kobayashi, S.,
{\it On compact K\"{a}hler manifolds with positive definite Ricci tensor.}
Ann. of Math. (2) 74 (1961), 570-574.

\bibitem[KW]{KW} Kobayashi, S.; Wu, H.-H., {\it On holomorphic sections of certain hermitian vector bundles.}
Math. Ann. 189 (1970), 1-4.


\bibitem[Ma]{Ma} Mantuano, T., {\it Discretization of Riemannian manifolds applied to the Hodge Laplacian.}  Amer. J. Math. 130 (2008), no. 6, 1477-508.

\bibitem[Li1]{Li:1} Li, P., {\it On the Sobolev constant and the p-spectrum of a compact Riemannian manifold,}
     Ann. Sci. \'Ecole Norm. Sup. 13 (1980), no. 4, 451-468.

\bibitem[Li2]{Li} Li, P., {\it Geometric analysis}, Cambridge Studies in Advanced Mathematics, 134. Cambridge University Press, 2012.

\bibitem[Lo]{Lo} Lott, J., {\it  Collapsing and the differential form Laplacian: the case of a smooth limit space}. Duke Math. J. 114 (2002), no. 2, 267-306.

\bibitem[LS]{LS} Li, P.; Schoen, R., {\it $L^p$ and mean value properties of
subharmonic functions on Riemannian manifolds}, Acta Math. 153 (1984), no. 3-4, 279-301.

\bibitem[LY]{LY} Li, P.; Yau, S.-T., {\it Eigenvalues of a compact Riemannian manifold,} AMS Proc. Symp.Pure Math. 36 (1980), 205-239.


\bibitem [LY2]{LY:2} Li, P.; Yau, S.-T, {\it On the parabolic kernel of the Schr\"odinger operator.}  Acta Math. 156 (1986), no. 3-4, 153-201.

\bibitem[MMZ]{MMZ} Ma, X.; Marinescu, G.; Zelditch, S., {\it
Scaling asymptotics of heat kernels of line bundles}.  Analysis, complex geometry, and mathematical physics: in honor of Duong H. Phong, 175-202, Contemp. Math., 644, Amer. Math. Soc., Providence, RI, 2015.

\bibitem[MK]{MK} Morrow, J.; Kodaira, K., {\it Complex manifolds.}  Holt, Rinchart and Winston, Inc., New York-Montreal-London 1971.



\bibitem[Sa]{Sa} Saloff-Coste, L., {\it  A note on Poincar\'e, Sobolev, and Harnack inequalities},
International Mathematics Research Notices 1992 (1992), no. 2, Pages 27-38.

\bibitem[WZ]{WZ} Wang, J. P. ; Zhou, L. F., {\it Gradient estimate for eigenforms of hodge laplacian}, Math. Res. Lett. 19 (2012), no. 3, 575-588.

\bibitem[Ti]{Ti} Tian, G., {\it On Calabi's conjecture for complex surfaces with positive first Chern class,}
Inventiones mathematicae 101 (1990), pages 101-172

\bibitem[Ya]{Ya}  Yang, H.-C., {\it Estimates of the first eigenvalue for a compact Riemann manifold}, Sci. China Ser. A 33 (1990), no. 1, 39-51.

\bibitem[Yau]{Yau} Yau, S.-T., {\it Isoperimetric constants and the first eigenvalue of a compact Riemannian manifold}, Ann. Sci. \'Ecole Norm. Sup. 8 (1975), no. 4, 487-507.



\bibitem[ZY]{ZY} Zhong, J.-Q.; Yang, H.-C., {\it On the estimate of the first eigenvalue of a compact Riemannian manifold.} Sci. Sinica Ser. A 27 (1984), no. 12, 1265-1273.

\end{thebibliography}
\end{document}